\numberwithin{equation}{section}
\newtheorem{theorem}{Theorem}[section]
\newtheorem{proposition}[theorem]{Proposition}
\newtheorem{lemma}[theorem]{Lemma}
\newtheorem{Theorem}{Theorem}[section]
\theoremstyle{definition}
\newcommand{\D}{\Delta}
\newcommand{\R}{\mathbb{R}}
\newcommand{\ve}{\varepsilon}
\newcommand{\B}{\mathbb{R}^2}
\newcommand{\vf}{\mathbf{v}}
\newcommand{\x}{\mathbb{X}}
\newcommand{\w}{\widetilde}
\thanks{The first author is supported by the SNSF   Grant No. P2BSP2-172064.}
\thanks{ The  second author  is partially supported by NSERC.}
\thanks{The third author is partially supported by NSFC No.11801550 and NSFC No.11871470.}
\begin{document}
\title{On the general Toda system with multiple singular points}
\author[A. Hyder]{Ali Hyder}
\address{\noindent Department of Mathematics, University of British Columbia,
Vancouver, B.C., Canada, V6T 1Z2}
\email{ali.hyder@math.ubc.ca}

\author[J. Wei]{Juncheng Wei}
\address{\noindent Department of Mathematics, University of British Columbia,
Vancouver, B.C., Canada, V6T 1Z2}
\email{jcwei@math.ubc.ca}

\author[W. Yang]{ Wen Yang}
\address{\noindent Wen ~Yang,~Wuhan Institute of Physics and Mathematics, Chinese Academy of Sciences, P.O. Box 71010, Wuhan 430071, P. R. China}
\email{wyang@wipm.ac.cn}

\begin{abstract}
In this paper, we consider the following elliptic Toda system associated to a general simple Lie algebra with multiple singular sources
\begin{equation*}
\begin{cases}
-\Delta w_i=\sum_{j=1}^na_{i,j}e^{2w_j}+2\pi\sum_{\ell=1}^m\beta_{i,\ell}\delta_{p_\ell}
\quad&\mbox{in}\quad\mathbb{R}^2,\\
\\
w_i(x)=-2\log|x|+O(1)~\mbox{as}~|x|\to\infty,\quad &i=1,\cdots,n,
\end{cases}
\end{equation*}
where $\beta_{i,\ell}\in[0,1)$. Under some suitable assumption on $\beta_{i,\ell}$ we establish the existence and non-existence results. This paper generalizes Luo-Tian's \cite{Luo-Tian} and Hyder-Lin-Wei's \cite{hlw} results to the general Toda system.
\end{abstract}

\maketitle

\section{Introduction}
In this paper, we shall consider the following singular Toda system with multiple singular sources
\begin{equation}
\label{1.su}
-\Delta w_i
=\sum_{j=1}^na_{i,j}e^{2w_j}+2\pi\sum_{\ell=1}^m\beta_{i,\ell}\delta_{p_\ell}\quad\mbox{in}\quad\mathbb{R}^2,
\end{equation}
where $a_{i,j}$ is the Cartan matrix associated to a simple Lie algebra, $\beta_{i,l}\in[0,1)$, $p_1,\cdots,p_m$ are distinct points in $\B$ and $\delta_{\ell}$ denotes the Dirac measure at $p_\ell,~\ell=1,\cdots,m.$

When the Lie algebra is $\mathbf{A}_1=\mathfrak{sl}_2$, \eqref{1.su} becomes the Liouville equation
\begin{equation}
\label{1.liouville}
\Delta u+e^{2u}=-2\pi\sum_{\ell=1}^m\gamma_{\ell}\delta_{p_\ell}\quad\mbox{in}\quad\B.
\end{equation}
The Toda system \eqref{1.su} and the Liouville equation \eqref{1.liouville} arise in many physical and geometric problem. On geometric side, Liouville equation is related to the Nirenberg problem of finding a conformal metric with  prescribed Gaussian curvature if $\{p_1,\cdots,p_m\}=\emptyset$, and the existence of the same curvature metric of problem \eqref{1.liouville} with conical singularities at $\{p_1,\cdots,p_m\}$. When the Lie algebra is $\mathbf{A}_n$, the Toda system \eqref{1.su} is closely related to holomorphic curves in projective spaces \cite{Doliwa} and the Pl\"ucker formulas \cite{gh}, while the periodic Toda systems are related to harmonic maps \cite{M-Guest}. In physics, the Toda system is a well-known integrable system and closely related to the $\mathcal{W}$-algebra in conformal field theory, see \cite{bfr,frrtw} and references therein. Liouville equation and Toda system also played an important role in Chern-Simons gauges theory. For example, $\mathbf{A}_n$ (n=2) Toda system governs the limit equations as physical parameters tend to $0$ and is used to explain the physics of high temperature, we refer the readers to \cite{Dunne,Yang,Yang1} for more background on it.

For the Liouville equation \eqref{1.liouville}, Chen and Li \cite{CL} classified all  solutions when there is no singular source provided the total integration of $e^{2u}$ on $\mathbb{R}^2$ is finite. Under the same integrability condition, Prajapat and Tarantello \cite{PT} completed the classification with one singular point. The question of conformal metrics with multiple conical singularities has been widely studied using various viewpoints. When $m=2$, equation \eqref{1.liouville} is equivalent to a Mean Field equation on $\mathbb{S}^2$ with three singularities, which can be chosen as $0,1$ and $\infty$ by M\"obius transformation, Eremenko's \cite{Eremenko} work gives the necessary and sufficient condition for the existence of a conformal metric of constant Gaussian curvature by studying the monodromy of the corresponding second order hypergeometric equation. For equation \eqref{1.liouville} with general $m\geq3$, Troyanov \cite{Tr} proved that there exists a solution provided $\gamma_{\ell},~\ell=1,2,\cdots,m$ satisfies the following condition
\begin{equation}
\label{1.troyanov}
0<2-\sum_{\ell=1}^m\gamma_{\ell}<2\min\{1,\min_{1\leq \ell\leq m}(1-\gamma_{\ell})\}.
\end{equation}
Later on, Luo-Tian \cite{Luo-Tian} proved that if the condition $0<\gamma_{\ell}<1$  is satisfied for $1\leq \ell\leq m$, then \eqref{1.troyanov} is necessary and sufficient, and the solution is unique. Precisely, we state it as the following theorem
\begin{Theorem}[\cite{Luo-Tian}]
\label{th.lt}
Let $m\geq 3$ and $p_1,\cdots,p_m$ be $m$ distinct points in $\B$. Then there exists a solution to \eqref{1.liouville} verifying the following behavior
\begin{align}
\label{1.ltbehavior}\left\{ \begin{array}{ll}
u(x)=-\gamma_\ell\log|x-p_\ell|+\mbox{bounded continuous function}\quad &\mbox{around each}~p_{\ell}, \\  \rule{0cm}{.5cm}
u(x)=-2\log|x|+\mbox{bounded continuous function} &\mbox{as}~|x|\to\infty,\\  \rule{0cm}{.5cm}
\gamma_\ell\in(0,1),&\ell=1,\cdots,m, \end{array}\right.
\end{align}
if and only if \eqref{1.troyanov} holds. Moreover, the solution is unique.
\end{Theorem}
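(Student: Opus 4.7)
The plan is to treat necessity, existence, and uniqueness by distinct techniques, all three resting on the fundamental mass identity for Liouville-type equations.

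For necessity, I would integrate \eqref{1.liouville} over a large ball $B_R$ and let $R \to \infty$. The divergence theorem together with the decay $u(x) = -2\log|x| + O(1)$ gives $\int_{\partial B_R}\partial_\nu u \to -4\pi$, yielding the mass identity
\begin{equation*}
\int_{\mathbb{R}^2} e^{2u}\,dx = 4\pi - 2\pi \sum_{\ell=1}^m \gamma_\ell,
\end{equation*}
which forces the first Troyanov inequality $\sum_\ell \gamma_\ell < 2$. The upper bounds $2 - \sum_\ell \gamma_\ell < 2(1-\gamma_k)$ reflect local mass constraints at each cone point and I would derive them via a Pohozaev identity on a small punctured disc around $p_k$ — equivalently via Bol's isoperimetric inequality applied to the conformal metric $e^{2u}|dx|^2$ — where the strict inequality $\gamma_k < 1$ enters essentially.

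For sufficiency, I would transplant the problem to $\mathbb{S}^2$ by stereographic projection; the decay $-2\log|x|$ matches the conformal factor, so the north pole is a regular point. Writing $u = v + G$ with $G(x) = -\sum_\ell \gamma_\ell \log|x-p_\ell|$ removes the Dirac masses, and the problem reduces to minimizing
\begin{equation*}
J(v) = \tfrac{1}{2}\int_{\mathbb{S}^2}|\nabla v|^2\,d\mu_0 + 2\int_{\mathbb{S}^2} v\,d\mu_0 - \Big(2-\sum_\ell \gamma_\ell\Big)\log\int_{\mathbb{S}^2} e^{2G} e^{2v}\,d\mu_0
\end{equation*}
over $H^1(\mathbb{S}^2)$. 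The Troyanov condition \eqref{1.troyanov} is exactly what the singular Moser-Trudinger inequality of Chen-Li-Lin needs to render $J$ coercive with a strict gap, after which the direct method of the calculus of variations produces a minimizer, which by elliptic regularity is a classical solution.

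For uniqueness, the difference $w := u_1 - u_2$ of two solutions is bounded across each $p_\ell$ (the logs cancel) and at infinity (the $-2\log|x|$ tails cancel), so $w \in L^\infty(\mathbb{R}^2)$ and $-\Delta w = e^{2u_1} - e^{2u_2}$ globally. A Kazdan-Warner-type integral identity combined with the strict convexity of $J$ at critical points — a consequence of the strict form of \eqref{1.troyanov} — forces $w \equiv 0$. The main obstacle I anticipate is compactness of the minimization: near the boundary of the Troyanov region a minimizing sequence may concentrate at a single cone point $p_k$, producing a bubble of mass exactly $2\pi(1-\gamma_k)$; ruling out this scenario, and thereby attaining the infimum of $J$, is the technical heart of the argument and requires a blow-up analysis in which the strict upper bounds of \eqref{1.troyanov} are used decisively.
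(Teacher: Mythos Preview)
This theorem is not proved in the paper at all: it is stated as Theorem~A with the attribution \cite{Luo-Tian} and serves purely as background motivation for the paper's results on general Toda systems. No proof, sketch, or even proof idea for it appears anywhere in the text. Consequently there is nothing in the paper to compare your proposal against.

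Your outline is a reasonable synthesis of the circle of ideas around this result, but it does not match what Luo--Tian actually do. For existence you invoke a Moser--Trudinger variational scheme, which is Troyanov's route (already cited separately as \cite{Tr} for the sufficiency direction); Luo--Tian's own contribution is the necessity and uniqueness. Their argument for those parts is geometric rather than analytic: they interpret a solution satisfying \eqref{1.ltbehavior} as a constant-curvature-one metric on $\mathbb{S}^2$ with $m+1$ cone points of angles $2\pi(1-\gamma_\ell)$ and $2\pi(2-\sum_\ell\gamma_\ell)$, realize it as the double of a spherical convex polygon, and then appeal to the classical rigidity of spherical polytopes. Your proposed uniqueness step --- a Kazdan--Warner identity plus ``strict convexity of $J$ at critical points'' --- is not a complete argument: $J$ is not globally convex on $H^1(\mathbb{S}^2)$, and convexity at a critical point gives local, not global, uniqueness. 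Likewise the necessity of the upper bound $2-\sum_\ell\gamma_\ell<2(1-\gamma_k)$ via a local Pohozaev identity is plausible but you have not indicated which identity or how the boundary terms are controlled; Luo--Tian obtain it from the triangle-type inequalities for the face angles of the associated spherical polygon.
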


We rewrite \eqref{1.troyanov} as follows
\begin{equation}
\label{1.condition-1}
\sum_{\ell=1}^m\gamma_\ell<2\quad\mathrm{and}\quad\sum_{\ell\neq j}\gamma_\ell>\gamma_j~\mbox{for every}~
j=1,\cdots,m.
\end{equation}
It is interesting to get a counterpart result of Theorem \ref{th.lt} for the Toda system of general simple Lie algebra. When the Lie algebra is $\mathfrak{sl}_3$, the first two authors of this paper and Lin \cite{hlw} deduce an existence result of \eqref{1.su} provided $\beta_{i,\ell},~i=1,2,~\ell=1,\cdots,m$ satisfies
\begin{equation}
\label{1.condition-2}
3(1+\beta_{i,j})<2\sum_{\ell=1}^m\beta_{i,\ell}+\sum_{\ell=1}^m\beta_{3-i,\ell},~
\sum_{l=1}^3\beta_{i,\ell}<2~\mbox{for}~i=1,2,~j=1,\cdots,m.
\end{equation}
While the authors also show that an equivalent condition of \eqref{1.condition-1} for $\mathbf{A}_2$ Toda system is not sufficient for the existence of solutions to \eqref{1.liouville} satisfying the behavior \eqref{1.ltbehavior}.

In this paper, we shall study the same problem for general Toda system. Precisely, we shall consider
the existence and non-existence of  solutions $(w_1,\cdots,w_n)$ to \eqref{1.su} satisfying
\begin{align}
\label{1.asy}
\left\{ \begin{array}{ll}
w_i(x)=-\beta_{i,\ell}\log|x-p_\ell|+h_{i,\ell}\quad &\mbox{around each point}~p_\ell,\\  \rule{0cm}{0.4cm}
w_i(x)=-2\log|x|+h_{i,m+1}\quad &\mbox{as}~|x|\to+\infty,\\ \rule{.0cm}{0.4cm}
h_{i,\ell}(x)~\mbox{is  continuous in a neighbourhood of}~p_\ell,\end{array}\right.
\end{align}
for $i=1,\cdots,n$ and $\ell=1,\cdots,m$ and $h_{i,m+1}$ is bounded outside a compact set. We set
\begin{equation*}
u_i(x)=w_i(x)+\sum_{\ell=1}^m\beta_{i,\ell}\log|x-p_\ell|,\quad i=1,\cdots,n.
\end{equation*}
We see that $w_i$ solves \eqref{1.su} if and only if $u_i$ solves
\begin{align}
\label{1.u}
\left\{ \begin{array}{ll}-\Delta u_i=\sum_{j=1}^na_{i,j}K_je^{2u_j}\quad\mbox{in}\quad \B,\\ \rule{0cm}{.5cm}
K_i(x):=\prod_{\ell=1}^m\frac{1}{|x-p_{\ell}|^{2\beta_{i,\ell}}}.
\end{array}\right.
\end{align}
The condition \eqref{1.asy} in terms of $u_i$ is
\begin{equation}
\label{1.newasy}
u_i(x)=-{\beta_i} \log|x|+\mbox{ a bounded continuous function on}~B_{1}^c,
\end{equation}
where 
\begin{equation}
\label{1.betai}
\beta_i:=2-\sum_{\ell=1}^m\beta_{i,\ell}.
\end{equation}

Our first result of this paper is on the existence of solutions to  \eqref{1.u}:
\begin{theorem}
\label{th1.1}
Let $m\geq 3$. Suppose $\{\beta_{i,\ell},~i=1,\cdots,n,~\ell=1,\cdots,m\}$ satisfies
\begin{equation}
\label{1.condition}
2\sum_{j=1}^na^{i,j}-1+\beta_{i,\ell}<\sum_{j=1}^n\sum_{s=1}^ma^{i,j}\beta_{j,s},\quad\forall i=1,\cdots,n,~\ell=1,\cdots,m,
\end{equation}
where $(a^{i,j})_{n\times n}$ is the inverse matrix of $(a_{i,j})_{n\times n}.$ Then given any $m$ distinct points $\{p_{\ell}\}_{\ell=1}^m\subset\B$ there exists a continuous solution $(u_1,\cdots,u_n)$ to \eqref{1.u} satisfying \eqref{1.newasy} with $\beta_i$ as in \eqref{1.betai}.
\end{theorem}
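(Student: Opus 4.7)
The plan is to establish existence via a variational scheme on balls $B_R$ together with a limiting procedure, after recasting the hypothesis \eqref{1.condition} as a sub-critical mass condition for each equation of the system. I first decouple the Laplacians by introducing $v_i:=\sum_j a^{i,j}u_j$, so that \eqref{1.u} becomes $-\Delta v_i=K_ie^{2u_i}$ with $u_i=\sum_j a_{i,j}v_j$. The prescribed behaviour \eqref{1.newasy} forces $v_i(x)=-c_i\log|x|+O(1)$ as $|x|\to\infty$, with $c_i:=\sum_j a^{i,j}\beta_j$, and the divergence theorem pins down $\int_{\B} K_ie^{2u_i}\,dx=2\pi c_i$. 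Using $\beta_j=2-\sum_s\beta_{j,s}$, a one-line rearrangement of \eqref{1.condition} yields the equivalent sub-critical bound
$$c_i+\beta_{i,\ell}<1\qquad\text{for every }i=1,\dots,n,\ \ell=1,\dots,m,$$
which is the natural non-concentration condition at each singular source $p_\ell$.

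For $R\gg 1$ I then set up a Dirichlet problem on $B_R$ with boundary values $u_i^R=-\beta_i\log R$, and search for critical points of a functional of the form
$$J_R(u)=\tfrac12\sum_{i,j}a^{i,j}\int_{B_R}\nabla u_i\cdot\nabla u_j\,dx-\pi\sum_i c_i\log\int_{B_R}K_ie^{2u_i}\,dx$$
over an affine subspace of $H^1(B_R)^n$ encoding the boundary data; the Euler--Lagrange equations recover \eqref{1.u} together with the mass normalisations $\int_{B_R} K_ie^{2u_i^R}=2\pi c_i$. Coercivity of $J_R$ follows from a singular Moser--Trudinger inequality for Toda systems on the ball, in the spirit of Chen--Lin and Jost--Wang but extended to an arbitrary Cartan matrix and to multiple conical sources, whose sharp thresholds are precisely $1-\beta_{i,\ell}$; the sub-critical bound derived above then guarantees that $J_R$ attains its infimum, yielding a solution $(u_1^R,\dots,u_n^R)$ of \eqref{1.u} on $B_R$.

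Letting $R\to\infty$, the uniform $L^1$-identity $\int_{B_R}K_ie^{2u_i^R}=2\pi c_i$ combined with interior $L^p$ and $L^\infty$ elliptic estimates delivers $R$-independent local bounds on $u_i^R$ away from $\{p_\ell\}_{\ell=1}^m$, and a diagonal extraction produces a locally uniform limit $(u_1,\dots,u_n)$ solving \eqref{1.u} on $\B$. Since each $v_i=\sum_j a^{i,j}u_j$ is then the logarithmic Newtonian potential of an $L^1$ density of total mass $2\pi c_i$, classical potential estimates at infinity force $v_i(x)=-c_i\log|x|+O(1)$, equivalent to \eqref{1.newasy}; continuity of the $u_i$ follows from standard elliptic regularity. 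The hard part I anticipate is the variational step: proving the appropriate singular Moser--Trudinger inequality for a general simple Lie algebra with multiple conical singularities, with sharp thresholds matching \eqref{1.condition} and with constants uniform in $R$ so that both coercivity and the passage to the limit go through. Existing versions for $\mathbf{A}_1$ (Troyanov, Chen) and $\mathbf{A}_2$ (Jost--Lin--Wang, Hyder--Lin--Wei) need to be unified and extended to the present setting.
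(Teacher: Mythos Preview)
Your reformulation of \eqref{1.condition} as the sub-critical mass bound $c_i+\beta_{i,\ell}<1$ matches the paper's \eqref{2.rel-2} exactly, but from there the two arguments diverge and your variational route has genuine gaps. First, the functional $J_R$ is not coercive for non-simply-laced algebras: the kinetic term uses the non-symmetric matrix $(a^{i,j})$, whose symmetric part is only positive semi-definite. For $\mathbf{G}_2$ one has $a^{1,1}=a^{2,2}=2$, $a^{1,2}=1$, $a^{2,1}=3$, so $\sum_{i,j}a^{i,j}\xi_i\xi_j=2(\xi_1+\xi_2)^2$, which vanishes on a line; a correct variational formulation must symmetrize the system first. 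Second, and more seriously, the singular Moser--Trudinger inequality you invoke---with sharp thresholds $1-\beta_{i,\ell}$ at each cone point and valid for an arbitrary Cartan matrix---is not in the literature; you acknowledge this, but proving it with the right constants is at least as hard as the theorem itself, and the known $\mathbf{A}_2$ results do not extend mechanically. Third, your passage $R\to\infty$ claims $R$-independent local $L^\infty$ bounds from the $L^1$ identity alone, but $L^1$ control on $K_ie^{2u_i^R}$ does not prevent concentration; ruling out blow-up of the sequence requires exactly the system-level Brezis--Merle/Pohozaev analysis that is the heart of the paper's argument.

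The paper avoids all three difficulties by a Leray--Schauder degree argument on $\mathbb X=(C_0(\R^2))^n$: it defines a compact operator $\mathcal T$ whose fixed points are solutions, and proves the a~priori bound $\|\mathbf v\|\le C$ for every $\mathbf v=t\mathcal T(\mathbf v)$, $t\in[0,1]$, by a blow-up contradiction (Proposition~\ref{pr2.1}). If concentration occurred at some point $p$, the Pohozaev identity of Lemma~\ref{le4.1} would force $\sigma_i(p)\ge\mu_i(p)$ for some $i$, hence $\bar\beta_i\ge 1-\beta_{i,\ell}$, contradicting \eqref{2.rel-2}. Then $\deg(I-\mathcal T,\mathbb X,0)=\deg(I,\mathbb X,0)=1$ gives existence directly, with no Moser--Trudinger inequality and no limiting procedure needed.
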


We notice that when $n=2$ and $(a_{i,j})$ is Cartan matrix for $\mathbf{A}_2$, then \eqref{1.condition} is equivalent to the condition \eqref{1.condition-2}. Next, we shall show that the equivalent condition proposed by Luo-Tian for single Liouville equation could not work for general Toda systems, namely a condition of the following form can not guarantee the existence of solutions to \eqref{1.u} - \eqref{1.betai}
\begin{equation}
\label{1.ltcondition}
\sum_{s=1,~s\neq \ell}^m\beta_{i,s}>\max_i\beta_{i,\ell}\quad\mbox{for every}\quad i=1,\cdots,n,~\ell=1,\cdots,m.
\end{equation}

The second result of this paper is the following
\begin{theorem}
\label{th1.2}
There exist a tuple of points $\{p_{\ell}\}_{\ell=1}^m\subset\B$ and $\beta_{i,\ell},~i=1,\cdots,n,~\ell=1,\cdots,m$ satisfying \eqref{1.ltcondition}, such that equation \eqref{1.u} has no solution satisfying \eqref{1.newasy}.
\end{theorem}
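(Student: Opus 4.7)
The plan is to derive a necessary Pohozaev-type identity that every solution of \eqref{1.u}--\eqref{1.newasy} must satisfy, and then to exhibit parameters satisfying \eqref{1.ltcondition} that violate this identity.

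First, integrating the $i$-th equation of \eqref{1.u} over $\mathbb{R}^2$ and using the divergence theorem together with the decay in \eqref{1.newasy} pins down the total masses
\[ m_j:=\frac{1}{2\pi}\int_{\mathbb{R}^2}K_je^{2u_j}\,dx=\sum_{i=1}^n a^{j,i}\beta_i,\qquad\beta_i=2-\sum_{\ell=1}^m\beta_{i,\ell}, \]
leaving no freedom at the level of total integrals.

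Second, I would derive a Pohozaev-type identity. Multiplying the $i$-th equation in \eqref{1.u} by $\sum_{k}a^{i,k}(x-q)\cdot\nabla u_k$ for a base point $q\in\mathbb{R}^2$, summing in $i$, and integrating on $B_R(q)\setminus\bigcup_{p_\ell\neq q}B_\varepsilon(p_\ell)$, then sending $R\to\infty$ and $\varepsilon\to 0$, the boundary at infinity contributes a term proportional to $\sum_{i,j}a^{i,j}\beta_i\beta_j$ (extracted from $u_i\sim -\beta_i\log|x|$), while each small circle around $p_\ell$ contributes an explicit term computed from the local expansion \eqref{1.asy}. Setting $q=p_{\ell_0}$ for each $\ell_0=1,\dots,m$ yields $m$ scalar identities of the schematic form $\Psi_{\ell_0}(\{p_\ell\},\{\beta_{i,\ell}\})=0$ that any solution must satisfy; in the $\mathbf{A}_2$ case this recovers the obstruction used in \cite{hlw}.

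Third, to construct the counterexample I would follow the pattern used for $\mathbf{A}_2$ in \cite{hlw}: take $m=3$ and $\beta_{i,\ell}\equiv\gamma$ independent of $i,\ell$, so that \eqref{1.ltcondition} holds trivially since $(m-1)\gamma>\gamma$ for $\gamma>0$. For $\gamma$ sufficiently small, the system of $m$ identities $\Psi_{\ell_0}=0$ is over-determined and becomes incompatible for any triple of distinct points $\{p_\ell\}$; the threshold at which incompatibility appears corresponds exactly to the saturation of \eqref{1.condition}, which for a general Cartan matrix produces an explicit bound $\gamma_\ast(a_{i,j})>0$ below which one may choose the triangle $\{p_\ell\}$ freely (e.g., equilateral, appropriately scaled) and obtain a contradiction. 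The rank-one case $\mathbf{A}_1$ is excluded since it is covered by Theorem~\ref{th.lt}, so only $n\geq 2$ needs to be treated.

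The main obstacle is the careful tracking of the singular boundary contributions at $q=p_{\ell_0}$ in the Pohozaev calculation: the continuous bounded remainder $h_{i,\ell_0}$ in \eqref{1.asy} must be controlled uniformly as $\varepsilon\to 0$, which in turn uses Brezis--Merle-type estimates for $u_i$ near each singular point and the positive-definiteness of the Cartan quadratic form $\sum_{i,j}a^{i,j}\xi_i\xi_j$ to preserve the signs in the resulting obstruction. Once the identity $\Psi_{\ell_0}=0$ is rigorously established, obstructing it for parameters in the regime where \eqref{1.ltcondition} holds but \eqref{1.condition} fails is a finite-dimensional algebraic exercise.
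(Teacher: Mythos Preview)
Your proposal has a genuine gap at the second step. The global Pohozaev computation you outline does \emph{not} produce an identity $\Psi_{\ell_0}(\{p_\ell\},\{\beta_{i,\ell}\})=0$ depending only on the data. When you integrate $K_j e^{2u_j}(x-q)\cdot\nabla u_j=\tfrac12 K_j(x-q)\cdot\nabla(e^{2u_j})$ by parts, the divergence term leaves a bulk contribution
\[
\sum_{\ell\neq\ell_0}\beta_{j,\ell}\int_{\R^2}\frac{(x-p_{\ell_0})\cdot(x-p_\ell)}{|x-p_\ell|^2}\,K_j(x)e^{2u_j(x)}\,dx,
\]
which depends on the unknown solution $u_j$ and cannot be evaluated from the asymptotics alone. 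Only the $\ell=\ell_0$ piece collapses to a multiple of $\int K_je^{2u_j}=2\pi\bar\beta_j$. So the ``$m$ scalar identities'' you describe are not algebraic constraints on the parameters; they are genuine integral identities with solution-dependent terms, and you have given no mechanism to show they are incompatible. Your claim that this ``recovers the obstruction used in \cite{hlw}'' is also inaccurate: the $\mathbf{A}_2$ argument in \cite{hlw} is a blow-up/compactness argument with seven singular points, not a direct global Pohozaev obstruction with $m=3$.

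The paper's proof proceeds by an entirely different route. It takes $m=3n+1$ (not $m=3$) and a carefully designed, highly non-symmetric set of weights $\beta_{i,\ell}$ (assumption~$\mathcal{D}$), and argues by induction on the rank $n$, the base case $n=2$ being \cite{hlw}. For the inductive step one fixes the first $3n_0+1$ points from the rank-$n_0$ non-existence, sends two of the three new points to infinity, and assumes a sequence of solutions exists. A Pohozaev identity is then used, but only \emph{locally at blow-up points} (Lemma~\ref{le4.1}) to force $\sigma_i(p)\geq\mu_i(p)$ for some $i$, which the mass budget $\bar\beta_i<1/100$ forbids; hence no blow-up occurs, the sequence converges, and the limit solves the rank-$n_0$ system, contradicting the inductive hypothesis. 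The Pohozaev identity here constrains local blow-up masses, not the global parameters, and the specific structure of assumption~$\mathcal{D}$ (in particular (d4) and (d5)) is what makes the argument close.
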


Let us close the introduction by mentioning the idea used in the proof of Theorem \ref{th1.2}. Our main tools are the induction method and Pohozaev identity. Based on the previous result, we already find points such that \eqref{1.u} has no solution when the coefficient matrix is $\mathbf{A}_2$ and $\beta_{i,\ell}$ satisfies \eqref{1.ltcondition}. This is the starting point of our approach. By assuming the non-existence result of a low rank Toda system, we  obtain that the non-existence result also holds for a higher rank (with rank plus one) by choosing  suitable points. The crucial thing of our argument is to exclude a blow-up phenomena for the higher rank Toda system, where the Pohozaev identity plays an important role. The blow-up phenomena for a general Toda system is very complicated, one of the fundamental issue concerning \eqref{1.u} is the computation of the local mass at the blow-up point. Until now, we can only compute it for $\mathbf{A}_n,~\mathbf{B}_n$,~$\mathbf{C}_n$ and $\mathbf{G}_2$, see \cite{lyz}.

This paper is organized as follows. We study the existence result (Theorem \ref{th1.1}) and non-existence result (Theorem \ref{th1.2}) in sections 2 and 3 respectively. In the last section, we present all the necessary lemmas and facts, including the Cartan matrix of all the simple Lie algebra and their inverse matrices.

\bigskip
\section{Proof of Theorem \ref{th1.1}}
In this section we shall prove Theorem \ref{th1.1}. We notice that if $u_1,\cdots,u_n$ is a continuous solution to \eqref{1.u}-\eqref{1.newasy} with $\beta_{i,\ell}<1$ for all $i=1,\cdots,n$ and $\ell=1,\cdots,m$, then
$$K_i(x)e^{2u_i}\in L^1(\B)$$
and $u_i$ has the following representation formula
\begin{equation*}
u_i(x)=\frac{1}{2\pi}\sum_{j=1}^na_{i,j}\int_{\B}\log\left(\frac{1}{|x-y|}\right)K_j(y)e^{2u_j(y)}dy+c_i,
~i=1,\cdots,n,
\end{equation*}
for some $c_i\in\R$. Moreover, using the asymptotic behavior \eqref{1.newasy}, we have
\begin{equation*}
\sum_{j=1}^na_{i,j}\int_{\B}K_je^{2u_j}dx=2\pi\beta_i,\quad i=1,\cdots,n,
\end{equation*}
that is
\begin{equation}
\label{2.rel-1}
\int_{\B}K_ie^{2u_i}=2\pi\bar\beta_i,\quad \bar\beta_i=\sum_{j=1}^na^{i,j}\beta_j,\quad i=1,\cdots,n.
\end{equation}
Then Theorem \ref{th1.1} is equivalent to the existence of solutions $(u_1,\cdots,u_n)$ to \eqref{1.u} satisfying \eqref{2.rel-1} and $\bar\beta_i$ verifies
\begin{equation}
\label{2.rel-2}
\bar\beta_i>0,\quad \bar\beta_i<1-\beta_{i,\ell}\quad \mbox{for every}~i=1,\cdots,n,~\ell=1,\cdots,m.
\end{equation}

As the paper \cite{hlw}, we shall use a fixed point argument to prove the existence.  To set up our argument, we introduce the following functional space
\begin{equation*}
\mathbb{X}=\underbrace{C_0(\B)\times\cdots\times C_0(\B)}_n,\quad
\|\mathbf{v}\|_\mathbb{X}=:\max_i\left\{\|v_i\|_{L^\infty(\B)}~\mbox{for}~\mathbf{v}\in\mathbb{X}\right\},
\end{equation*}
where   ${\bf{v}}=(v_1,\dots,v_n) $, and  $C_0(\B)$ denotes the space of continuous functions vanishing at infinity. We fix $u_0\in C^\infty(\B)$ such that
\begin{equation*}
u_0(x)=-\log|x|\quad\mbox{on}\quad B_{1}^c. 
\end{equation*}
For $v\in C_0(\B)$ we set $c_{i,v}$ to be the unique number such that
\begin{equation*}
\int_{\R^2}\bar{K_i}e^{2(v+c_{i,v})}dx=2\pi\bar\beta_i,\quad \bar K_i:=K_ie^{2\beta_i u_0},\quad i=1,\cdots,n,
\end{equation*}
where $\bar\beta_i$ is defined in \eqref{2.rel-1}. Now we define
$$\mathcal{T}:\mathbb{X}\to\mathbb{X},~(v_1,\cdots,v_n)\mapsto(\bar v_1,\cdots,\bar v_n),$$
where we have set
\begin{equation}
\label{2.barv}
\bar v_i(x):=\frac{1}{2\pi}\sum_{j=1}^na_{i,j}\int_{\R^2}\log\left(\frac{1}{|x-y|}\right)\bar K_j(y)e^{2(v_j(y)+c_{j,v_j})}dy-\beta_i u_0(x),\quad i=1,\cdots,n.
\end{equation}
As $\beta_i=\sum_{j=1}^na_{i,j}\bar\beta_j$, for $x\in B_{1}^c$, \eqref{2.barv} can be written as
\begin{equation*}
\bar v_i(x)=\frac{1}{2\pi}\sum_{j=1}^na_{i,j}\int_{\R^2}\log\left(\frac{|x|}{|x-y|}\right)\bar K_j(y)e^{2(v_j(y)+c_{j,v_j})}dy,\quad i=1,\cdots,n.
\end{equation*}
Using the fact that
$$\bar K_i=K_ie^{2\beta_iu_0}=O(|x|^{-4})\quad\mbox{for}~|x|~\mbox{large},$$
one can prove that
$$\bar v_i(x)\to 0~\mbox{as}~ |x|\to\infty,\quad i=1,\cdots,n.$$
Moreover, the operator $\mathcal{T}$ is compact, see \cite[Lemma 4.1]{HMM}.

To find a fixed point of the map $\mathcal{T}$, it suffices to show that $\mbox{deg}(I-\mathcal{T},\mathbb{X},0)\neq0$. We shall use a homotopy type argument to prove the latter fact. In our homotopy type argument, we need the result below.
\begin{proposition}
\label{pr2.1}
There exists a constant $C>0$ such that
\begin{equation*}
\|\mathbf{v}\|_{\mathbb{X}}\leq C~ \mbox{for every}~ (\mathbf{v},t)\in\mathbb{X}\times[0,1]~  \mbox{satisfying}~
\vf=t\mathcal{T}(\vf).
\end{equation*}
\end{proposition}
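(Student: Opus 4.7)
The proof is by contradiction. Assume there is a sequence $(\mathbf{v}^k, t_k) \in \mathbb{X} \times [0,1]$ with $\mathbf{v}^k = t_k \mathcal{T}(\mathbf{v}^k)$ and $\|\mathbf{v}^k\|_{\mathbb{X}} \to \infty$. The first step is to convert the fixed-point identity into a bona fide Toda-type PDE. Setting $U_i^k := v_i^k + c_{i, v_i^k} + t_k \beta_i u_0$ and differentiating \eqref{2.barv} (using $-\Delta \log\tfrac{1}{|x-y|} = 2\pi \delta_y$, with the harmonic correction $t_k\beta_i u_0$ designed to cancel the contribution of $-\Delta u_0$) one finds
\begin{equation*}
-\Delta U_i^k = t_k \sum_{j=1}^n a_{i,j}\widetilde K_j^k\, e^{2 U_j^k} \ \text{in}\ \mathbb{R}^2, \qquad \int_{\mathbb{R}^2} \widetilde K_j^k\, e^{2 U_j^k} \, dx = 2\pi \bar\beta_j,
\end{equation*}
where $\widetilde K_j^k(x) := K_j(x)\, e^{2(1-t_k)\beta_j u_0(x)}$ retains the conical singularity of order $-2\beta_{j,\ell}$ at each $p_\ell$ and has integrable decay at infinity. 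A blow-up of $\mathbf{v}^k$ is thus equivalent to a concentration phenomenon for the singular Toda system $\mathbf{U}^k$ under the \emph{fixed} global masses $2\pi\bar\beta_j$. The degenerate limit $t_k \to 0$ can be excluded at the outset: in that regime the identity $\|\mathbf{v}^k\|_{\mathbb{X}} = t_k \|\mathcal{T}(\mathbf{v}^k)\|_{\mathbb{X}}$ together with the mass normalization on $c_{j,v_j^k}$ is inconsistent with $v_j^k \in C_0(\mathbb{R}^2)$, so we may assume $t_k \geq t_* > 0$.

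\medskip

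The second step is a concentration--compactness analysis of $\mathbf{U}^k$ on $\mathbb{R}^2 \setminus \{p_1,\dots,p_m\}$, in the spirit of Brezis--Merle and the Jost--Lin--Wang extension for Toda systems. Combined with the decay at infinity built into the kernel in \eqref{2.barv}, this produces the usual trichotomy: either (i) $\mathbf{U}^k$ is locally uniformly bounded from above --- in which case $L^1$ elliptic regularity applied to the representation formula gives a uniform $L^\infty$ bound on $\mathbf{v}^k$, contradicting $\|\mathbf{v}^k\|_{\mathbb{X}} \to \infty$; (ii) the prescribed mass concentrates on a finite set $S \subset \mathbb{R}^2 \cup \{\infty\}$; or (iii) some $U_i^k \to -\infty$ on an open region, which once more forces all of the mass to concentrate on a finite set and reduces to (ii). In case (ii) one attaches to each $q \in S$ the local mass tuple $\sigma_i(q) := \lim_{r\to 0}\lim_{k\to\infty}\int_{B_r(q)} \widetilde K_i^k\, e^{2U_i^k}\,dx$, subject to the global balance $\sum_{q\in S}\sigma_i(q) = 2\pi\bar\beta_i$.

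\medskip

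The third step rules out concentration by combining a classification of admissible local masses with the Pohozaev identity. For each $q \in S$, the tuples $(\sigma_i(q))_i$ belong to a finite list determined by the scale-invariant Toda system arising in the bubbling limit; this list is known for $\mathbf{A}_n,\mathbf{B}_n,\mathbf{C}_n,\mathbf{G}_2$ from Lin--Wei--Zhao \cite{lyz}, and for the remaining exceptional algebras reduces to a finite inspection using the explicit inverse Cartan data collected in the last section. The Pohozaev identity, applied on a large ball containing $S$ and passed to the limit, couples the global masses $2\pi\bar\beta_i$ to the $\sigma_i(q)$ and the singular weights $\beta_{i,\ell}$. Hypothesis \eqref{1.condition}, rewritten in the form $0 < \bar\beta_i < 1-\beta_{i,\ell}$ recorded in \eqref{2.rel-2}, is engineered precisely so that no admissible configuration is compatible with this balance: concentration at a singular source $p_\ell$ is obstructed by the strict inequality $\bar\beta_i < 1 - \beta_{i,\ell}$, while concentration at a regular interior point forces the local mass of a full finite-energy Toda bubble and already exceeds $2\pi\bar\beta_i$. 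The resulting contradiction yields the desired a priori estimate; the principal difficulty is the combinatorial matching of the Pohozaev balance against the admissible local mass configurations across all simple Lie algebras.
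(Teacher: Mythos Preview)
Your overall contradiction/blow-up strategy matches the paper's, but the execution is considerably heavier than what is actually needed, and as written it contains a gap.

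The paper's argument is simpler in two ways. First, instead of your $U_i^k$, the paper sets $\psi_i^k:=v_i^k+c_{i,v_i^k}+\tfrac12\log t_k$, which absorbs the factor $t_k$ into the exponential so that $\psi_i^k$ satisfies the Toda system with the \emph{fixed} weight $\bar K_j$ (plus a harmless $t_k\beta_i\Delta u_0$ term). This removes any need to argue separately that $t_k\to 0$ cannot occur; your exclusion of this case is vague and not obviously correct. Second, and more importantly, the paper does not invoke any classification of local masses. Lemma~\ref{le4.1} records the Pohozaev identity at a blow-up point and extracts from it the single elementary consequence \eqref{4.comparison}: at every blow-up point $p$, at least one index $i$ has $\sigma_i(p)\ge\mu_i(p)$. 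Since $\sigma_i(p)\le\bar\beta_i$ by the global mass constraint, this immediately contradicts \eqref{2.rel-2}, i.e., $\bar\beta_i<1-\beta_{i,\ell}$. Concentration at infinity is handled by a Kelvin transform (the paper's Case~2), reducing to a regular interior blow-up point where $\mu_i=1$, again contradicting $\bar\beta_i<1$.

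Your plan, by contrast, proposes to match the Pohozaev balance against the full list of admissible local mass tuples from \cite{lyz}. This is unnecessary, and it is also incomplete: the classification in \cite{lyz} covers only $\mathbf{A}_n,\mathbf{B}_n,\mathbf{C}_n,\mathbf{G}_2$, while for $\mathbf{D}_n,\mathbf{E}_6,\mathbf{E}_7,\mathbf{E}_8,\mathbf{F}_4$ no such classification is available in the literature---the ``finite inspection'' you allude to is in fact an open problem. The paper sidesteps this entirely because the crude inequality $\sigma_i(p)\ge\mu_i(p)$ for some $i$ follows from the Pohozaev identity for every simple Lie algebra by the same one-line argument (see the end of the proof of Lemma~\ref{le4.1}). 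Replace your third step with this observation and the proof goes through uniformly.
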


\begin{proof}
We assume by contradiction that the result is false, then there exists
$$\vf^k=(v_1^k,\cdots,v_n^k)\quad\mbox{and}\quad t^k\in(0,1]$$
with
$$\vf^k=t^k\mathcal{T}(\vf^k)\quad\mbox{and}\quad \|\vf^k\|_\mathbb{X}\to\infty.$$
We set
\[\psi_i^k:=v_i^k+c_i^k,\quad c_i^k=c_{i,v_i^k}+\frac12\log t^k.\]
Then we have
\begin{equation*}
\psi_i^k(x)=\frac{1}{2\pi}\int_{\B}\sum_{j=1}^n\log\left(\frac{1}{|x-y|}\right)a_{i,j}\bar K_j(y)e^{2\psi_{j}^k(y)}dy-t^k\beta_iu_0(x)+c_i^k,\quad i=1,\cdots,n.
\end{equation*}
For $|x|\geq 1$ the above equation can be written as
\begin{equation*}
\psi_i^k(x)=\frac{1}{2\pi}\int_{\B}\sum_{j=1}^n\log\left(\frac{|x|}{|x-y|}\right)a_{i,j}\bar K_j(y)e^{2\psi_{j}^k(y)}dy+c_i^k,\quad i=1,\cdots,n.
\end{equation*}
Next we claim that
\begin{equation}
\label{2.claim}
\max_i\left\{\sup\psi_i^k(x)\right\}\xrightarrow{k\to\infty}\infty.
\end{equation}
Indeed if \eqref{2.claim} is not true, we can use the Green representation \eqref{2.barv} together with $\max_i\sup\psi_i^k(x)\leq C$   to obtain that $\|\vf\|_{\x}\leq C$, a contradiction.  Thus \eqref{2.claim} holds. Without loss of generality we may assume that  \footnote{Even though the equation satisfied by $\psi^k_i$ for $2\leq i\leq n-1$ looks slightly different form the one $\psi_1^k$, the proof is same.} \[\sup\psi_1^k(x)=\max_i\left\{\sup\psi_i^k(x)\right\}.\]
Let $x^k\in\B$ be a point such that
\begin{equation*}
\sup\psi_1^k(x)<\psi_1^k(x^k)+1.
\end{equation*}
If $x^k$ is bounded then, up to a subsequence, $x^k\to x^\infty.$ We consider the following two cases.

\indent Case 1. $|x^k|$ is uniformly bounded.

We notice that $\psi_i^k,~i=1,\cdots,n$ satisfies the following equation
\begin{equation*}
\Delta\psi_i^k+\sum_{j=1}^na_{i,j}\bar K_je^{2\psi_j^k}{+t^k\beta_i\D u_0}=0.
\end{equation*}
Let
\[\mathcal{S}=\{p\in \R^2\mid \max_i\psi_i^k(p_k)\to\infty~\mbox{ for some }~p_k\to p\}.\]
For  $p\in\mathcal{S}$, we define
\begin{equation*}
\sigma_i(p)=\frac{1}{2\pi}\lim_{r\to0}\lim_{k\to\infty}\int_{B_r(p)}\bar K_ie^{2\psi_i^k},\quad i=1,\cdots,n.
\end{equation*}
By Lemma \ref{le4.1}, we have
\begin{equation}
\label{2-i}
\sigma_i(p)\geq \mu_i(p)\quad\mbox{holds at least for one}~i\in\{1,\cdots,n\},
\end{equation} where
\begin{align*}
\mu_i(p):=\left\{ \begin{array}{ll}
1&\quad\text{if }p\not\in\{p_1,\dots,p_m\},\\ \rule{0cm}{.4cm}
1-\beta_{i,\ell}&\quad\text{if }p=p_\ell,\quad \ell=1,\dots,m.
\end{array}\right.
\end{align*}
Using \eqref{2-i} and the fact
\[\frac{1}{2\pi}\int_{\B}\bar K_ie^{2\psi_i^k}=\bar\beta_i,\]
we have for some $i\in\{1,\dots,m\}$
\begin{equation*}
\bar\beta_i\geq \sigma_i\geq \mu_i\geq \min_{\ell}\{1,1-\beta_{i,\ell}\}.
\end{equation*}
This contradicts to \eqref{2.rel-2}. Thus $\{|x_k|\}$ is unbounded.\\

\noindent Case 2. $|x^k|\to\infty$.

We set
\begin{equation*}
\tilde\psi_i^k(x)=\psi_i^k(\frac{x}{|x|^2}),\quad \tilde{K}_i(x)=\frac{1}{|x|^4}\bar K_i(\frac{x}{|x|^2})
\quad\mathrm{in}\quad\B\setminus\{0\},~i=1,\cdots,n,
\end{equation*}
and we extend them continuously at the origin. Then $\tilde\psi_i^k$ satisfies
\begin{equation*}
\Delta\tilde\psi_i^k+\sum_{j=1}^na_{i,j}\tilde K_je^{2\tilde\psi_i^k}=0\quad\mathrm{in}\quad B_{1}.
\end{equation*}
Since $\tilde K_i$ is continuous,  $\tilde{K}_i(0)>0,~i=1,\cdots,n$ and
\begin{equation*}
\tilde\psi_i^k(\tilde x^k)\to\infty,\quad \tilde x^k=\frac{x^k}{|x^k|^2}\to0,
\end{equation*}
repeating the arguments of Case 1, we get
\begin{equation*}
\bar\beta_i\geq\frac{1}{2\pi}\int_{B_{1}}\tilde K_je^{2\tilde\psi_i^k}\geq 1\geq1-\beta_{i,\ell}\quad\mbox{for any}~\ell=1,\cdots,m.
\end{equation*}
Contradiction arises again. Therefore there is no blow up for $\{\psi_i^k\}$, and we finish the proof.
\end{proof}

\begin{proof}[Proof of Theorem \ref{th1.1}.]
By Proposition \ref{pr2.1}, we get that
\begin{equation*}
\mbox{deg}(I-\mathcal{T},\mathbb{X},0)=\mbox{deg}(I,\mathbb{X},0)=1.
\end{equation*}
From which we derive the existence of solution to \eqref{1.u} - \eqref{1.betai}.
\end{proof}

\bigskip
\section{Proof of Theorem \ref{th1.2}}
In this section  we shall prove Theorem \ref{th1.2}. We provide the details for $\mathbf{A}_n$ only, and state the differences for other cases at the end of this section. Before the proof, we make the following preparation. For $n\geq 2$ we consider the following tuple of positive numbers:
\begin{equation*}
\mathcal{B}=\left\{b_1,\cdots,b_{4n+1}\in(0,1)\mid\{b_i\}_{i=1}^{4n+1}~\mbox{satisfies the assumption}~\mathcal{D}\right\},
\end{equation*}
where the assumption $\mathcal{D}$ is:
\begin{equation*}
\mbox{Assumption}~\mathcal{D}: \begin{cases}
\mbox{(d1)}~ &\sum_{i=1}^5b_i=2,~b_2=b_3<\frac12 b_1,~b_4=b_5,~2b_1+b_4<2,\\
\mbox{(d2)}~ &\sum_{i=1}^4b_{4l-3+i}=2,~l=2,\cdots,n,\\
\mbox{(d3)}~ &b_{4l-2}=b_{4l-1}=b_{4l},~l=2,\cdots,n,\\
\mbox{(d4)}~ &n^2b_{4i+1}<\frac{1}{400},~i=1,\cdots,n,\\
\mbox{(d5)}~ &b_4=b_9.
\end{cases}
\end{equation*}
Let us point out that the set $\mathcal{B}$ is not empty, we can choose
\begin{equation*}
\begin{aligned}
&b_1=1-\frac23\ve,\quad b_2=b_3=\frac12-\frac23\ve,\quad b_4=b_5=\ve,\\
&b_{4l-2}=b_{4l-1}=b_{4l}=\frac23-\frac{1}{3}\ve,
~b_{4l+1}=\ve,~l=2,\cdots,n.
\end{aligned}
\end{equation*}
Then it is easy to see that the above $\{b_i\}_{i=1}^{4n+1}$ satisfies the assumption $\mathcal{D}$ provided $\ve<\frac{1}{400n^2}$. From (d1) and (d4), we  see that
\begin{equation}
\label{3.con1}
b_4+b_1>1\quad\mbox{and}\quad b_4+b_2=b_4+b_3<1,
\end{equation}
and
\begin{equation}
\label{3.con2}
b_{4i+1}<\frac{1}{50},\quad i=1,\cdots,n.
\end{equation}

We shall show a non-existence result to the Toda system \eqref{1.u} with $m=3n+1$ and $\beta_{i,\ell}$ satisfying the following:
\begin{equation}
\label{3.con3}
\begin{aligned}
&\beta_{i,3i-2+\ell}=b_{4i-3+\ell},~i=2,\cdots,n,~\ell=1,2,3,\\
&\beta_{1,\ell}=b_\ell,~\ell=1,\cdots,4,\quad\mbox{and}\quad \beta_{i,\ell}=0~\mbox{for the other}~i,\ell.
\end{aligned}
\end{equation}
Then using (d1) and (d2), we get
\begin{equation*}
\beta_i:=2-\sum_{\ell=1}^{3n+1}\beta_{i,\ell}=b_{4i+1}.
\end{equation*}
While from (d1) and (d3), it is not difficult to see that
$$\beta_{1,1}<\beta_{1,2}+\beta_{1,3}+\beta_{1,4},$$
and
$$\beta_{i,j}<\sum_{\ell\neq j}\beta_{i,\ell},\quad i=2,\cdots,n.$$
Hence $\beta_{i,\ell}$ satisfies \eqref{1.ltcondition}.

Now let us state the main result of this section:

\begin{proposition}
\label{pr3.1}
Let $n\geq 2$ and $\beta_{i,\ell}$ be as in \eqref{3.con3} with $\{b_1,\cdots,b_{4n+1}\}$ satisfying the assumption $\mathcal{D}$, then there exists points $\{p_\ell\}_{\ell=1}^{3n+1}$ such that equation \eqref{1.u} (with the corresponding Lie algebra matrix  $\mathbf{A}_n$) has no solution satisfying the asymptotic condition \eqref{1.newasy}.
\end{proposition}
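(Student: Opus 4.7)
The plan is to argue by induction on the rank $n$, exactly as flagged in the introduction. The base case $n=2$ is the non-existence result of \cite{hlw} for the $\mathbf{A}_2$ Toda system under condition \eqref{1.ltcondition}: by (d1) the four numbers $b_1,b_2,b_3,b_4$ satisfy the inequalities required there, so one can take $\{p_1,p_2,p_3,p_4\}$ to be the bad configuration provided by \cite{hlw} and place $p_7$ anywhere convenient. The role of (d5) is to guarantee that the value $b_4$ used at the ``boundary'' of the $\mathbf{A}_2$ problem equals the value $b_9$ that enters the next layer, so that the bad $\mathbf{A}_2$-configuration can be used, unchanged, as the first $3(n-1)+1$ points of a bad $\mathbf{A}_n$-configuration when we pass from $n-1$ to $n$.

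For the inductive step, assume the proposition for $\mathbf{A}_{n-1}$ with some bad configuration $\{q_1,\ldots,q_{3n-2}\}$ and parameters $\{b_1,\ldots,b_{4n-3}\}$ satisfying the truncation of $\mathcal{D}$. Set the first $3n-2$ points of the $\mathbf{A}_n$ configuration equal to these $q_\ell$'s, and let the three new points $p_{3n-1}^R, p_{3n}^R, p_{3n+1}^R$ be located at a common large distance $R$ from $\{q_1,\ldots,q_{3n-2}\}$ (with a fixed mutual separation). Suppose toward a contradiction that a solution $(u_1^R,\ldots,u_n^R)$ to the $\mathbf{A}_n$ system exists for arbitrarily large $R$. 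I would then take a limit $R\to\infty$. Away from the roaming points, the extra singularity appears only through the coupling term $-K_n e^{2u_n}$ in the equation for $u_{n-1}$, and since the total mass $\int K_n e^{2u_n}=2\pi\bar\beta_n$ with $\bar\beta_n$ of order $b_{4n+1}$ (very small by (d4)), its influence on the first $n-1$ components inside a fixed compact neighborhood of $\{q_1,\ldots,q_{3n-2}\}$ is $o(1)$. Thus, modulo blow-up, a subsequence of $(u_1^R,\ldots,u_{n-1}^R)$ converges to a solution of the $\mathbf{A}_{n-1}$ Toda system at $\{q_1,\ldots,q_{3n-2}\}$ with the required parameters, contradicting the induction hypothesis.

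The whole argument therefore reduces to excluding blow-up for the sequence $(u_1^R,\ldots,u_n^R)$, and this is where the Pohozaev identity and the explicit local-mass classification of \cite{lyz} enter. For any blow-up point $p$ the local masses $\sigma_i(p)=\frac{1}{2\pi}\lim_{r\to 0}\lim_{R\to\infty}\int_{B_r(p)}\bar K_i e^{2u_i^R}$ take values in the finite list determined by the $\mathbf{A}_n$ root system; the Pohozaev identity on small balls around $p$, combined with the global mass identities \eqref{2.rel-1}, then forces a rigid algebraic relation between these $\sigma_i(p)$'s, the Cartan coefficients $a_{i,j}$ and the singular exponents $\beta_{i,\ell}$. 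Using the smallness (d4) of the $b_{4i+1}$, the asymmetry \eqref{3.con1} between $b_4$ and $b_1,b_2,b_3$, and the equal-triple structure (d3), one checks case by case that no admissible tuple $(\sigma_1(p),\ldots,\sigma_n(p))$ is compatible with both \eqref{2.rel-1} and the Pohozaev identity; hence no blow-up occurs.

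The main obstacle is precisely this last step: the blow-up analysis for $\mathbf{A}_n$ is genuinely intricate, because partial bubbling (only some components concentrate) is allowed and each scenario must be eliminated separately. I expect the argument to separate into two distinct situations — $p$ coincides with one of the singular points $p_\ell$ versus $p$ is a regular point — and within each to further split according to which subset of indices $i$ has $\sigma_i(p)>0$. The careful numerology of assumption $\mathcal{D}$ is set up precisely so that, in every one of these cases, a Pohozaev computation produces a quantitative inequality that is violated by the smallness conditions (d4) and the sharp conditions \eqref{3.con1}, closing the induction.
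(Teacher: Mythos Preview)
Your inductive scaffolding matches the paper's, but there are two genuine gaps in the part you flag as ``the main obstacle.''

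First, your blow-up exclusion at $p=p_1$ fails as stated. The tuple $(\sigma_1,\dots,\sigma_n)=(1-\beta_{1,1},0,\dots,0)$ \emph{does} satisfy the Pohozaev identity, and since (d1), (d5) and \eqref{3.con1} give $\bar\beta_1\ge b_4>1-b_1=1-\beta_{1,1}$, it is also compatible with the mass constraints $\sigma_i\le\bar\beta_i$ coming from \eqref{2.rel-1}. So neither a case-check against Pohozaev nor the full local-mass classification of \cite{lyz} rules out blow-up at $p_1$. The paper closes this by a different mechanism: it first shows (via Lemma~\ref{le4.bm}) that only $u_1^k$ can blow up at $p_1$, so $\sigma_1(p_1)=1-\beta_{1,1}$ exactly; the \emph{strict} inequality $\bar\beta_1>\sigma_1(p_1)$ then forces, through the Brezis--Merle concentration alternative, $u_1^k\to-\infty$ locally uniformly away from $\mathfrak S_1$. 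Since no mass for $i\le n_0$ escapes to infinity (this is the paper's Step~1, a Kelvin-transform argument you also omit), the full mass $2\pi\bar\beta_1$ must concentrate, so $\mathfrak S_1$ would contain at least two points---contradicting $\mathfrak S\setminus\{p_1\}=\emptyset$, which has already been established. This cardinality argument is essential and absent from your sketch.

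Second, even after excluding blow-up you cannot immediately conclude that the limit of $(u_1^R,\dots,u_{n-1}^R)$ solves the $\mathbf A_{n-1}$ system: you must first rule out the possibility that $u_n^R$ converges to a finite limit $\bar u_n$, in which case the limit system is still $\mathbf A_n$ (with a degenerate $K_n$), not $\mathbf A_{n-1}$. The paper excludes this via an integrability argument: the asymptotic decay of $\bar u_n$ together with $\bar K_ne^{2\bar u_n}\in L^1(\R^2)$ forces the inequality \eqref{3.3-2}, which (d2)--(d4) make impossible. (Relatedly, the paper fixes \emph{one} of the three new points and sends only two to infinity, so that $\bar K_n$ retains a singularity at $p_{3n_0+2}$; your variant with all three roaming would also yield an impossible integrability inequality, but the case must still be treated.) Only after this does $u_n^R\to-\infty$ follow, giving the reduction to the $\mathbf A_{n-1}$ problem and closing the induction.
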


\begin{proof}
We shall apply the induction method to prove the conclusion. When $n=2$, the problem becomes
\begin{align*}
\left\{ \begin{array}{ll}
\Delta u_1+a_{1,1}K_1e^{2u_1}-a_{1,2}K_2e^{2u_2}=0\quad\text{in }\R^2\\ \rule{0cm}{0.5cm}
\Delta u_2-a_{2,1}K_1e^{2u_1}+a_{2,2}K_2e^{2u_2}=0 \quad\text{in }\R^2
\end{array} \right.
\end{align*}
where
\[K_1(x)=\prod_{i=1}^4\frac{1}{|x-p_i|^{2\beta_{1,i}}},\quad
K_2(x)=\prod_{i=5}^7\frac{1}{|x-p_i|^{2\beta_{2,i}}}.\]
One can check that $b_1,b_2,b_3,b_4,b_6,b_7,b_8$ satisfies the assumptions $\mathcal{A}1)$ to $\mathcal{A}5)$
(with $\beta_i=b_i$ for $i=1,2,3,4$ and $\beta_i=b_{i+1}$ for $i=5,6,7$), then by \cite[Lemma 3.2]{hlw}, we can find   points $p_1,\cdots,p_7$ such that \eqref{1.u} has no solution with the asymptotic behavior \eqref{1.newasy}.

Suppose the result holds for $n_0$ with $2\leq n_0$, and let $p_1,\cdots,p_{3n_0+1}$ be the points such that the following equation has no solution
\begin{align}
\label{3.m-1}
\left\{\begin{array}{ll}\Delta u_i+\sum_{j=1}^{n_0}a_{i,j}K_je^{2u_j}=0, &\quad i=1,\cdots,n_0,\\
\rule{0cm}{.5cm}
u_i(x)=-(2-\sum_{\ell=1}^3\beta_{i,3i-2+\ell})\log|x|~\mbox{as}~|x|\to\infty, &\quad i=1,\cdots,n_0.\end{array}\right.
\end{align}
Now let us find points  {$\{p_i\}_{i=3n_0+2}^{3n_0+4}$} so that   the conclusion holds for  $\{p_i\}_{i=1}^{3n_0+4}$. Let $p_{3n_0+2}$ be a fixed point (different from $p_1,\cdots,p_{3n_0+1}$). We claim that  for $|p_{3n_0+3}|,|p_{3n_0+4}|$ large and $p_{3n_0+3}\neq p_{3n_0+4}$  there exists no solution to \eqref{1.u} with $n=n_0+1$ having the asymptotic behavior
\begin{equation*}
u_i(x)=-\beta_i\log|x|+O(1)\quad\mbox{as}\quad|x|\to+\infty,~i=1,\cdots,n_0+1.
\end{equation*}
We shall prove the claim by contradiction. Suppose there is a sequence of solutions $\{u_i^k\}_{i=1}^{n_0+1}$ of \eqref{1.u}-\eqref{1.newasy} ($n$ is replaced by $n_0+1$) with
\begin{equation*}
p_{\ell}=p_{\ell,k},\quad |p_{\ell}|\to\infty\quad\mbox{for}\quad \ell=3n_0+3,3n_0+4.
\end{equation*}
Equivalently, we have $u_i^k,~i=1,\cdots,n_0+1$ satisfies
\begin{align*}
\left\{\begin{array}{ll}
\Delta u_i^k+\sum_{j=1}^{n_0+1}a_{i,j}\widetilde K_{j}e^{2u_j^k}=0 \quad\mbox{in} &\quad \B,\quad i=1,\cdots,n_0+1,\\
\rule{0cm}{.5cm}
\int_{\B}\w K_ie^{2u_i^k}=2\pi\bar\beta_i, &\quad i=1,\cdots,n_0+1,\\
\rule{0cm}{.5cm}
|p_{3n_0+3}|,|p_{3n_0+4}|\to+\infty,\end{array}\right.
\end{align*}
where $\w K_i(x)=K_i(x),~i=1,\cdots,n_0,$ and
\begin{equation*}
\w K_{n_0+1}(x)=|p_{3n_0+3}|^{2\beta_{n_0+1,3n_0+3}}|p_{3n_0+4}|^{2\beta_{n_0+1,3n_0+4}}
\prod_{\ell=1}^{3}\frac{1}{|x-p_{3n_0+1+\ell}|^{2\beta_{n_0+1,3n_0+1+\ell}}}.
\end{equation*}
Notice that $\w K_i,~i=1,\cdots,n_0$ is independent of $k$ and is integrable due to \eqref{3.con2}. We  see that
\begin{equation*}
\w K_{n_0+1}(x)\xrightarrow{k\to\infty}\frac{1}{|x-p_{3n_0+2}|^{2\beta_{n_0+1,3n_0+2}}}~\mbox{locally uniformly in}~\B\setminus\{p_{3n_0+2}\}.
\end{equation*}

We shall divide our proof into the following steps:

Step 1. We prove that
\begin{equation}
\label{3.pr-3}
\lim_{R\to\infty}\lim_{k\to\infty}\int_{B_R^c}\w K_ie^{2u_i}=0,\quad i=1,\cdots,n_0.
\end{equation}
We set
\begin{equation*}
\hat u_i^k(x)=u_i^k(\frac{x}{|x|^2})-\beta_i\log|x|,\quad i=1,\cdots,n_0+1,
\end{equation*}
then setting
\[q_{\ell}:=\frac{p_{\ell}}{|p_{\ell}|^2},\quad \ell=1,\cdots,3n_0+4,\]
we see that $\hat u_i^k$   satisfies
\begin{equation*}
\Delta\hat u_i^k(x)+\sum_{j=1}^{n_0+1}a_{i,j}\hat K_je^{2\hat u_{j}^k}=0,\quad i=1,\cdots,n_0,n_0+1,
\end{equation*}
where
\[\hat K_i(x)=\prod_{\ell=1}^{3n_0+1}\frac{|q_{\ell}|^{2\beta_{i,\ell}}}{|x-q_{\ell}|^{2\beta_{i,\ell}}},
\quad i=1,\cdots,n_0,\]
and
\[\hat K_{n_0+1}=|q_{3n_0+2}|^{2\beta_{n_0+1,3n_0+2}}
\prod_{\ell=1}^3\frac{1}{|x-q_{3n_0+1+\ell}|^{2\beta_{n_0+1,3n_0+1+\ell}}}.\]
We set
$$\bar\beta_i=\sum_{j=1}^{n_0+1}c^{i,j}\beta_j,\quad i=1,\cdots,n_0+1,$$
where $c^{i,j}$ is the inverse matrix of $a_{i,j}$ of rank $n_0+1$. Using Lemma \ref{le4.inverse}, we have
\begin{equation*}
\sum_{j=1}^{n_0+1}c^{i,j}<4(n_0+1)^2.
\end{equation*}
Combined with (d4), it is easy to check that
\begin{equation}
\label{3.pr-6}
\frac{1}{2\pi}\int_{\B}\hat K_ie^{2\hat u_i^k}
=\bar\beta_i<\frac{1}{100},\quad i=1,\cdots,n_0+1,
\end{equation}
and
\begin{equation*}
\hat K_i(x)\to 1\quad\mbox{as}\quad |x|\to0,\quad i=1,\dots,n_0.
\end{equation*}
Now we can apply Lemma \ref{le4.bm} with $\alpha=0$, to get
$$\hat u_i^k(x)\leq C~\mbox{in a neighborhood of origin},\quad i=1,\cdots,n_0.$$
Thus \eqref{3.pr-3} follows.\\

Step 2. Set
$$\mathfrak{S}_i=\left\{x\in\B:\mbox{there is a sequence}~x^k\to x~\mbox{such that}~u_i^k(x^k)\to\infty\right\},~i=1,\cdots,n_0+1.$$
We shall show that
$$\mathfrak{S}=\bigcup_{i=1}^{n_0+1}\mathfrak{S}_i=\emptyset.$$
For any $p\in\mathfrak{S},$ we set
\begin{equation}
\label{3-sigma-p}
\sigma_i(p)=\frac{1}{2\pi}\lim_{r\to0}\lim_{k\to\infty}\int_{B_r(p)}\w K_ie^{u_i^k}dx,\quad i=1,\cdots,n_0+1.
\end{equation}
It is well known that $u_i^k$ satisfies the integral equation
\begin{equation*}
u_i^k(x)=\frac{1}{2\pi}\int_{\B}\log\left(\frac{1+|y|}{|x-y|}\right)
\sum_{j=1}^{n_0+1}a_{i,j}\w K_je^{2u_j^k(y)}dy+C_{i}^k,~i=1,\cdots,n_0+1.
\end{equation*}
For any $p\in\mathfrak{S}$, let $r_0>0$ be such that $B_{r_0}(p)\cap\mathfrak{S}=\{p\}$, then from the above integral representation, one can show that
\begin{equation*}
|u_i^k(x)-u_i^k(y)|\leq C\quad\mbox{for every}\quad x,y\in\partial B_{r_0}(p),~\quad i=1,\cdots,n_0+1.
\end{equation*}
Therefore, in a neighborhood of each blow-up point, we see that $u_i^k$ satisfies the bounded oscillation property, and it implies that $\sigma_1(p),\cdots,\sigma_{n_0+1}(p)$ satisfies the pohozaev identity (see Lemma \ref{le4.1})
\begin{equation}
\label{3.poho}
\sum_{i=1}^{n_0+1}\sigma_i^2(p)-\sum_{i=1}^{n_0}\sigma_i(p)\sigma_{i+1}(p)=\sum_{i=1}^{n_0+1}\mu_i(p)\sigma_i(p),
\end{equation}
where
\begin{equation*}
\mu_i(p)=\begin{cases}
1,\quad &\mbox{if}~p\notin\{p_1,\cdots,p_{3n_0+1}\},\\
1-\beta_{i,\ell},\quad &\mbox{if}~p=p_{\ell}.
\end{cases}
\end{equation*}
Using \eqref{3.poho}, we conclude that for at least one index $i$ of $\{1,\cdots,n_0+1\}$, $\sigma_i(p)\geq\mu_i(p)$. As a consequence,
\begin{equation}
\label{3.2-1}
\bar\beta_i\geq \sigma_1(p)\geq \mu_i(p).
\end{equation}
For $p\in\B\setminus\{p_1\}$, we shall show \eqref{3.2-1} is impossible. Using \eqref{3.con2} and \eqref{3.pr-6}, we have
\begin{equation*}
\begin{cases}
\beta_{1,\ell}+\bar\beta_1<1~\mbox{for}~\ell=2,3,4,\\
\beta_{i,3i-2+\ell}+\bar\beta_i<1~\mbox{for}~i=2,\cdots,n_0,~\ell=1,2,3,\\
\beta_{n_0+1,3n_0+2}+\bar\beta_{n_0+1}<1.
\end{cases}
\end{equation*}
This implies \eqref{3.2-1} never holds if $p\in\B\setminus\{p_1\}$. If $p=p_1$, then we can apply Lemma \ref{le4.bm} to conclude that $u_i^k,~i=2,\cdots,n_0+1$ are uniformly bounded above in a neighborhood of $p_1,$ otherwise, we would get
$$\bar\beta_i\geq\sigma_i(p)\geq 1,$$
which contradicts to (d4). Then we get
\begin{equation}
\label{3.2-3}
\bar\beta_1\geq\sigma_1(p)=1-\beta_{1,1}.
\end{equation}
In fact, {as $\beta_{1,4}=b_4=b_5=b_9=\beta_2$, $\beta_1=b_5$,} we have
\begin{equation}
\label{3.2-3-0}
\bar\beta_1=\sum_{j=1}^{n_0+1}c^{1,j}\beta_j\geq\frac{n_0+1}{n_0+2}\beta_{1,4}+\frac{n_0}{n_0+2}\beta_{1,4}
\geq \beta_{1,4},
\end{equation}
where we used $c^{i,j}=\frac{\min\{i,j\}(n_0+2-\max\{i,j\})}{n_0+2}$, see Lemma \ref{le4.inverse}. Using (d1) and \eqref{3.con1}, we have
\begin{equation*}
\bar\beta_1+\beta_{1,1}\geq b_1+b_4>1.
\end{equation*}
Therefore, the strict inequality of \eqref{3.2-3} holds, i.e.,
\begin{equation*}
\bar\beta_1>\sigma_1(p)=1-\beta_{1,1}.
\end{equation*}
Then we can apply the arguments of \cite[Theorem 3]{Brezis-Merle} to get that concentration property holds for $u_1^k$, that is
\begin{equation*}
u_1^k(x)\to-\infty\quad\mbox{locally uniformly in}~\B\setminus\mathfrak{S}_1,
\end{equation*}
we must have that the Cardinality of $\mathfrak{S}_1$ is at least 2, thanks to the Step 1. However, we have already shown that
$$\mathfrak{S}\setminus\{p_1\}=\emptyset.$$
Thus contradiction arises again, and $\mathfrak{S}=\emptyset.$\\

Step 3. $u_i^k\to \bar u_i$ in $C_{\mathrm{loc}}^2(\B),~i=1,\cdots,n_0$, where $u_1,\cdots,u_{n_0}$ satisfies \eqref{3.m-1}. Since
$$\mathfrak{S}=\emptyset\quad\mbox{and}\quad \bar\beta_i>0,~i=1,\cdots,n_0,$$
one of the following holds: passing to a subsequence if necessary,
\begin{enumerate}
  \item [(i)] $u_i^k\to\bar u_i$ in $C_{\mathrm{loc}}^2(\B)$ for $i=1,\cdots,n_0+1,$
  \item [(ii)] $u_i^k\to\bar u_i$ in $C_{\mathrm{loc}}^2(\B)$ for $i=1,\cdots,n_0,$ and $u_{n_0+1}^k\to-\infty$ locally uniformly in $\B$.
\end{enumerate}
Now we assume by contradiction that (i) happens. Then we get that the limit functions $(\bar u_1,\cdots,\bar u_{n_0+1})$ satisfy the system
\begin{equation*}
\left\{\begin{array}{ll}
\Delta\bar u_i+\sum_{j=1}^{n_0+1}a_{i,j}\bar K_je^{\bar u_j}=0~\mbox{in}~\B,\quad i=1,\cdots,n_0+1,\\ \rule{.0cm}{.5cm}
\int_{\B}\bar K_ie^{2\bar u_i}=2\pi\bar\beta_i,~i=1,\cdots,n_0,\quad
\int_{\B}\bar K_{n_0+1}e^{2\bar u_{n_0+1}}=2\pi\gamma\leq 2\pi\bar\beta_{n_0+1},\end{array}\right.
\end{equation*}
where
\[\bar K_i(x)=\w K_i(x),~i=1,\cdots,n_0,\quad\mbox{and}\quad \bar K_{n_0+1}=\frac{1}{|x-p_{3n_0+2}|^{2\beta_{n_0+1,3n_0+2}}}.\]
Then one has
\begin{equation*}
\lim_{|x|\to\infty}\frac{\bar u_{n_0+1}(x)}{\log|x|}=-(2\gamma-\bar\beta_{n_0}),
\end{equation*}
and together with $\bar K_{n_0+1}e^{\bar u_{n_0+1}}\in L^1(\B)$ we have
\begin{equation}
\label{3.3-2}
\beta_{n_0+1,3n_0+2}+2\gamma-\bar\beta_{n_0}>1,
\end{equation}
and this is impossible due to (d2)-(d4) in assumption $\mathcal{D}$.
Therefore, (ii) holds, and we get that it reduces to the equation \eqref{3.m-1}. However, \eqref{3.m-1} has no solution and contradiction arises. Thus, the conclusion holds also for $n_0+1$ and we finish the whole proof.
\end{proof}

\begin{proof}[Proof of Theorem \ref{th1.2} for $\mathbf{A}_n$]
This is a direct consequence of Proposition \ref{pr3.1}.
\end{proof}

\begin{proof}[Proof of Theorem \ref{th1.2} for other Lie algebras]
For $\mathbf{E}_6$, $\mathbf{E}_7$, $\mathbf{E}_8$, $\mathbf{B}_n,$ $\mathbf{C}_n,$ and $\mathbf{D}_n$ with $n\geq 3$, , we can derive the counterpart non-existence results from $\mathbf{A}_5$, $\mathbf{A}_6$, $\mathbf{A}_7$, $\mathbf{A}_{n-1}$ through almost the same argument of Proposition \ref{pr3.1}. Indeed, we use $\mathbf{D}_n,~n\geq 4$ ($\mathbf{D}_n$ only make sense for $n\geq3$ and $\mathbf{D}_3=\mathbf{A}_3$) as an example to explain it. Let $\beta_{i,\ell}$ be as in \eqref{3.con3} with $\{b_1,\cdots,b_{4n+1}\}$ satisfying the assumption $\mathcal{D}$. Using Proposition \ref{pr3.1}, we can find points $\{p_{\ell}\}_{\ell=1}^{3n-2}$ such that equation \eqref{1.u} has no solution satisfying the asymptotic behavior \eqref{1.newasy}. Then we prove the non-existence result by contradiction. Following the Step 1 of the proof of Proposition \ref{pr3.1}, we define the same sequence of solutions of \eqref{1.u} with $n_0+1$ replaced by $n$ and $\mathbf{A}_{n_0+1}$ by $\mathbf{D}_n$, and reach the same conclusion \eqref{3.pr-3} for first $n-1$ components. In Step 2, we prove that the blow-up phenomena can not happen. To show that the blow-up point $p\notin\mathbb{R}^2\setminus\{p_1\}$, the only thing we used is the Pohozaev identity and there exists at least one index $i\in\{1,\cdots,n\}$ such that $\sigma_i(p)\geq \mu_i(p)$. Using Lemma \ref{le4.1}, we see that it holds for general simple Lie algebra matrix. To show $p\neq p_1$, we only use \eqref{3.2-3-0} and it is easy to check that it holds also for the other cases by \eqref{4.abcd} and \eqref{4.efg}. In Step 3, we used \eqref{3.3-2} when we exclude the case that the limit function $\bar u_n$ can not be bounded uniformly. While for $\mathbf{D}_n$ case, we get
\begin{equation*}
\lim_{|x|\to+\infty}\frac{\bar u_n(x)}{\log|x|}=-(2\tilde\gamma-\bar\beta_{n-2}),\quad \tilde\gamma=\frac{1}{2\pi}\int_{\B}\bar K_{n}e^{2\bar u_n},
\end{equation*}
and $\bar K_{n}e^{\bar u_n}\in L^1(\mathbb{R}^2)$ implies that
\begin{equation*}
\beta_{n,3n-1}+2\tilde\gamma-\bar\beta_{n-2}>1.
\end{equation*}
Using (d2)-(d4) in the assumption $\mathcal{D}$, we can show that the above inequality is still not true and the case that $\bar u_n$ is uniformly bounded can also be excluded.

For $\mathbf{C}_2$ ($\mathbf{B}_2$ is equivalent to $\mathbf{C}_2$) and $\mathbf{G}_2$, we need to use a different tuple of numbers. Let us fix $b_1,\cdots,b_7\in(0,1)$ satisfies the following assumption:
\begin{equation*}
\mbox{Assumption}~\mathcal{D}_1: \begin{cases}
\mbox{(d6)}~ &\sum_{\ell=1}^4b_{\ell}+b_4=2,~2b_1+b_4<2,\\
\mbox{(d7)}~ &\sum_{\ell=5}^7b_{\ell}+b_4=2,~b_5=b_6=b_7,\\
\mbox{(d8)}~ &b_2=b_3<\frac12b_1,~b_{4}<\frac{1}{1000}.
\end{cases}
\end{equation*}
A typical example of $(b_1,\cdots,b_7)$ satisfying assumption $\mathcal{D}_1$ is
\begin{equation*}
\begin{aligned}
&b_1=1-\frac23\ve,~b_2=\frac12-\frac23\ve,~b_3=\frac12-\frac23\ve,~b_4=\ve,\\
&b_5=b_6=b_7=\frac{2}{3}-\frac13\ve,\quad \ve<\frac{1}{1000}.
\end{aligned}
\end{equation*}
We set
\begin{equation*}
\label{3.bcg-2}
\beta_{1,\ell}=\begin{cases}
b_{\ell},\quad&\mbox{if}~\ell=1,2,3,4\\
0,&\mbox{if}~\ell=5,6,7
\end{cases},\qquad
\beta_{2,\ell}=\begin{cases}
0,\quad&\mbox{if}~\ell=1,2,3,4\\
b_{\ell},&\mbox{if}~\ell=5,6,7
\end{cases}.
\end{equation*}
Then we can follow the arguments of \cite[Lemma 3.1 and Lemma 3.2]{hlw} to find points $\{p_{\ell}\}_{\ell=1}^7$ such that \eqref{1.u} has no solution verifying the asymptotic behavior \eqref{1.newasy}.


For $\mathbf{F}_4$ we can use the non-existence result of $\mathbf{A}_2$ to find points $\{p_{\ell}\}_{\ell=1}^{10}$ such that
\begin{equation}
\label{3.todaf4-3}
\left\{  \begin{array}{ll}
\Delta u_1+2K_1e^{2u_1}-K_2e^{2u_2}=0~&\mbox{in}~\B,\\ \rule{0cm}{.5cm}
\Delta u_2-K_1e^{2u_1}+2K_2e^{2u_2}-2K_3e^{2u_3}=0\quad &\mbox{in}~\B,\\ \rule{0cm}{.5cm}
\Delta u_3-K_2e^{2u_2}+2K_3e^{2u_3}=0~&\mbox{in}~\B,\\ \rule{0cm}{.5cm}
u_i(x)=-(2-\sum_{\ell=1}^m\beta_{i,\ell})\log|x|+O(1)~\mbox{as}~|x|\to\infty,\quad \quad  & i=1,2,3,  \end{array}\right.
\end{equation}
has no solution. By letting $\hat u_3=u_3+\frac12\log2$, we can make \eqref{3.todaf4-3} to a new system with a symmetric   coefficient matrix. In this case, we can derive the corresponding Pohozaev identity of \eqref{3.todaf4-3} from \cite[Proposition 3.1]{LWZ},
\begin{equation*}
\sum_{i=1}^2\sigma_i^2(p)+2\sigma_3^2(p)-\sigma_1(p)\sigma_2(p)-2\sigma_2(p)\sigma_3(p)
=\sum_{i=1}^2\mu_i\sigma_i(p)+2\mu_3\sigma_3(p),
\end{equation*}
where $\sigma_i(p)$ is defined in the same spirit of \eqref{3-sigma-p}. We can easily see that there exists at least one index $i\in\{1,2,3\}$ such that $\sigma_i(p)\geq\mu_i(p).$ Then we follow the proof of Proposition \ref{pr3.1} to deduce the non-existence result of \eqref{3.todaf4-3} for suitable points $\{p_{\ell}\}_{\ell=1}^{10}$. 
{Based on the non-existence result of \eqref{3.todaf4-3}, we fix the points $\{p_{\ell}\}_{\ell=1}^{10}.$ Next, we repeat the argument of Proposition \ref{pr3.1} to derive the non-existence result of $\mathbf{F}_4$ Toda system by choosing appropriate points $\{p_{\ell}\}_{\ell=11}^{13}.$}
\end{proof}

\smallskip
\section{Some useful results}
In this section, we shall present several useful facts which are used in previous section. The first one is on the
matrices of the general simple Lie algebras:
\begin{align*}
&\mathbf{A}_n=:\scriptsize{\left(\begin{matrix}
2&-1&0&\cdots&0&0&0\\
-1&2&-1&\cdots&0&0&0\\
\vdots&\vdots&\vdots&\ddots&\vdots&\vdots&\vdots\\
0&0&0&\cdots&2&-1&0\\
0&0&0&\cdots&-1&2&-1\\
0&0&0&\cdots&0&-1&2
\end{matrix}\right)},\quad
\mathbf{B}_n=:\scriptsize{\left(\begin{matrix}
2&-1&0&\cdots&0&0&0\\
-1&2&-1&\cdots&0&0&0\\
\vdots&\vdots&\vdots&\ddots&\vdots&\vdots&\vdots\\
0&0&0&\cdots&2&-1&0\\
0&0&0&\cdots&-1&2&-2\\
0&0&0&\cdots&0&-1&2
\end{matrix}\right)},\\
&\mathbf{C}_n=:\scriptsize{\left(\begin{matrix}
2&-1&0&\cdots&0&0&0\\
-1&2&-1&\cdots&0&0&0\\
\vdots&\vdots&\vdots&\ddots&\vdots&\vdots&\vdots\\
0&0&0&\cdots&2&-1&0\\
0&0&0&\cdots&-1&2&-1\\
0&0&0&\cdots&0&-2&2
\end{matrix}\right)},\quad
\mathbf{D}_n=:\scriptsize{\left(\begin{matrix}
2&-1&0&\cdots&0&0&0\\
-1&2&-1&\cdots&0&0&0\\
\vdots&\vdots&\vdots&\ddots&\vdots&\vdots&\vdots\\
0&0&0&\cdots&2&-1&-1\\
0&0&0&\cdots&-1&2&0\\
0&0&0&\cdots&-1&0&2
\end{matrix}\right)},\\
&\mathbf{E}_6=:\scriptsize{\left(\begin{matrix}
2&-1&0&0&0&0\\
-1&2&-1&0&0&0\\
0&-1&2&-1&0&-1\\
0&0&-1&2&-1&0\\
0&0&0&-1&2&0\\
0&0&-1&0&0&2
\end{matrix}\right)},\quad
\mathbf{E}_7=:\scriptsize{\left(\begin{matrix}
2&-1&0&0&0&0&0\\
-1&2&-1&0&0&0&0\\
0&-1&2&-1&0&0&0\\
0&0&-1&2&-1&0&-1\\
0&0&0&-1&2&-1&0\\
0&0&0&0&-1&2&0\\
0&0&0&-1&0&0&2
\end{matrix}\right)},\\
&\mathbf{E}_8=:\scriptsize{\left(\begin{matrix}
2&-1&0&0&0&0&0&0\\
-1&2&-1&0&0&0&0&0\\
0&-1&2&-1&0&0&0&0\\
0&0&-1&2&-1&0&0&0\\
0&0&0&-1&2&-1&0&-1\\
0&0&0&0&-1&2&-1&0\\
0&0&0&0&0&-1&2&0\\
0&0&0&0&-1&0&0&2
\end{matrix}\right)},~
\mathbf{F}_4=:\scriptsize{\left(\begin{matrix}
2&-1&0&0\\
-1&2&-2&0\\
0&-1&2&-1\\
0&0&-1&2
\end{matrix}\right)},~
\mathbf{G}_2=:\scriptsize{\left(\begin{matrix}
2&-1\\
-3&2
\end{matrix}\right)}.
\end{align*}
We shall derive an estimate on each entry of the inverse matrix of above matrices. For $\mathbf{A}_n,~\mathbf{B}_n,~\mathbf{C}_n$ and $\mathbf{D}_n$ type matrices, we get their inverse matrices as follows, see \cite[section 4]{RS} or \cite{wz}.
\begin{equation}
\label{4.abcd}
\begin{aligned}
&\left(\mathbf{A}_{n}^{-1}\right)_{i,j}=\frac{\min\{i,j\}(n+1-\max\{i,j\})}{n+1},~1\leq i,j\leq n.\\
&\left(\mathbf{B}_n^{-1}\right)_{i,j}=\begin{cases}
\min\{i,j\},~&1\leq i\leq n-1,~1\leq j\leq n,\\
\frac12j,~&i=n,~1\leq j\leq n.
\end{cases}\\
&\left(\mathbf{C}_n^{-1}\right)_{i,j}=\begin{cases}
\min\{i,j\},~&1\leq i\leq n,~1\leq j\leq n-1,\\
\frac12i,~&1\leq i\leq n,~j=n.
\end{cases}\\
&\left(\mathbf{D}_n^{-1}\right)_{i,j}=\begin{cases}
\min\{i,j\},~&1\leq i,j\leq n-2,\\
\frac12\min\{i,j\},~&1\leq\min\{i,j\}\leq n-2<\max\{i,j\}\leq n,\\
\frac14(n-2),~&i=n,j=n-1,~\mathrm{or}~i=n-1,j=n,\\
\frac{1}{4}n,~&i=n-1,j=n-1,~\mathrm{or}~i=n,j=n.
\end{cases}
\end{aligned}
\end{equation}
\noindent By straightforward computation, we have
\begin{equation}
\label{4.efg}
\begin{aligned}
&\mathbf{E}_6^{-1}=\scriptsize{\left(\begin{matrix}
4/3&5/3&2&4/3&2/3&1\\
5/3&{10/3}&4&8/3&4/3&2\\
2&4&6&4&2&3\\
4/3&8/3&4&10/3&5/3&2\\
2/3&4/3&2&5/3&4/3&1\\
1&2&3&2&1&2
\end{matrix}\right)},
\quad
\mathbf{E}_7^{-1}=\scriptsize{\left(\begin{matrix}
3/2&2&5/2&3&2&1&3/2\\
2&4&5&6&4&2&3\\
5/2&5&15/2&9&6&3&9/2\\
3&6&9&12&8&4&6\\
2&4&6&8&6&3&4\\
1&2&3&4&3&2&2\\
3/2&3&9/2&6&4&2&7/2
\end{matrix}\right)},\\
&\mathbf{E}_8^{-1}=\scriptsize{\left(\begin{matrix}
2&3&4&5&6&4&2&3\\
3&6&8&10&12&8&4&6\\
4&8&12&15&18&12&6&9\\
5&10&15&20&24&16&8&12\\
6&12&8&24&30&20&10&15\\
4&8&12&16&20&14&7&10\\
2&4&6&8&10&7&4&5\\
3&6&9&12&15&10&5&8
\end{matrix}\right)},~
\mathbf{F}_4^{-1}=\scriptsize{\left(\begin{matrix}
2&3&4&2\\
3&6&8&4\\
2&4&6&3\\
1&2&3&2
\end{matrix}\right)},~
\mathbf{G}_2^{-1}=\scriptsize{\left(\begin{matrix}
2&1\\
3&2
\end{matrix}\right)}.
\end{aligned}
\end{equation}
From \eqref{4.abcd} and \eqref{4.efg}, we get the following conclusion
\begin{lemma}
\label{le4.inverse}
For each type Cartan matrix $(a_{i,j})_{n\times n},$ let $(c_{i,j})_{n\times n}$ be its inverse matrix. Then we have the following estimate on $c_{i,j},~1\leq i,j\leq n,$
\begin{equation*}
0<c_{i,j}<4n.
\end{equation*}
\end{lemma}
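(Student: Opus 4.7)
The plan is a direct case-by-case verification using the explicit formulas \eqref{4.abcd} and \eqref{4.efg}. Positivity of every entry $c_{i,j}>0$ is immediate from inspection of those formulas, since each given expression is manifestly a positive rational number. So the only content is the upper bound $c_{i,j}<4n$, which in every case is far from sharp and should follow by elementary estimation.

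First I would handle the four classical series using the closed-form expressions in \eqref{4.abcd}. For $\mathbf{A}_n$, since $\min\{i,j\}\leq n$ and $n+1-\max\{i,j\}\leq n$, one has
\[
c_{i,j}=\frac{\min\{i,j\}\,(n+1-\max\{i,j\})}{n+1}\leq \frac{n^2}{n+1}<n<4n.
\]
For $\mathbf{B}_n$ and $\mathbf{C}_n$, every entry is either $\min\{i,j\}\leq n$ or $\tfrac12 i,\tfrac12 j\leq \tfrac{n}{2}$, which is bounded by $n<4n$. For $\mathbf{D}_n$, all four cases yield entries bounded by $\max\{n-2,\tfrac{n-2}{2},\tfrac{n-2}{4},\tfrac{n}{4}\}\leq n<4n$.

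Next I would handle the five exceptional cases $\mathbf{E}_6,\mathbf{E}_7,\mathbf{E}_8,\mathbf{F}_4,\mathbf{G}_2$ by simply reading off the largest entry in each of the matrices displayed in \eqref{4.efg}. The maxima are $6,15,30,8,3$ respectively, while $4n=24,28,32,16,8$; in every case the max entry is strictly less than $4n$. Since these are finitely many finite matrices this step is nothing more than inspection.

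There is no real obstacle — everything reduces to arithmetic with the explicit formulas given in the section. The only point worth flagging is that the constant $4n$ is chosen to be uniformly valid across all nine Cartan types (tight essentially only for $\mathbf{E}_8$, where $30<32=4\cdot 8$); any tighter uniform bound would require treating the exceptional series separately, which is unnecessary for the applications in Sections 2 and 3.
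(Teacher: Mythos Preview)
Your proposal is correct and follows exactly the paper's approach: the paper simply states that the lemma follows from the explicit formulas \eqref{4.abcd} and \eqref{4.efg}, and your case-by-case verification is precisely the routine check implied there. One harmless slip: the maximum entry of $\mathbf{E}_7^{-1}$ is $12$, not $15$, but of course $12<28=4\cdot 7$ still holds.
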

\medskip

The following lemma is a generalization of Brezis-Merle \cite{Brezis-Merle} type result, we refer the readers to \cite[Lemma 5.1]{hlw} for a proof.
\begin{lemma}
\label{le4.bm}
Let $u^k$ be a sequence of solutions to
\begin{equation*}
\left\{ \begin{array}{ll} 
\Delta u^k+\frac{f^k(x)}{|x|^{2\alpha}}e^{2u^k}=g_k~\mbox{in}~B_1,\\ \rule{0cm}{.5cm}
\int_{B_1}\frac{f^k(x)}{|x|^{2\alpha}}{e^{2u^k}}dx\leq 2\pi(1-\alpha-\delta), \end{array}\right.
\end{equation*}
where $\delta>0$, $\alpha\in[0,1)$, and $g^k$ is a family of non-negative functions such that $\|g^k\|_{L^1(B_1)}\leq C$. Suppose that $0\leq f^k\leq C$ and $\inf_{B_1\setminus B_{\tau}}f^k\geq C_{\tau}$ for some $\tau\in(0,\frac13)$. Then $\{u^k\}$ is locally uniformly bounded from above in $B_1.$
\end{lemma}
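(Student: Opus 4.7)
The plan is to combine the classical Brezis--Merle inequality with a singular bootstrap, using the strict slack $\delta>0$ in the integral bound to close the argument near the origin. The argument splits into two regions: an ``easy'' bound away from $0$ where $f^k$ is bounded below, and a delicate bound inside $B_\tau$ obtained by propagating the away-from-origin bound inward via the maximum principle.

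First I would obtain a uniform upper bound on $u^k$ on any compact subset of $B_1\setminus\overline{B_\tau}$. There both $|x|^{-2\alpha}$ and $f^k$ are bounded above and $f^k\ge C_\tau$, so $\int e^{2u^k}$ is uniformly bounded; combined with $e^{2s}\ge 1+2s_+$ this gives $u^k_+$ uniformly in $L^1$. The equation reads $-\Delta u^k=\widetilde V^k e^{2u^k}-g^k$ with $\widetilde V^k:=f^k|x|^{-2\alpha}$ bounded and $\int \widetilde V^k e^{2u^k}\le 2\pi(1-\alpha-\delta)<2\pi$, so the classical Brezis--Merle theorem (cf.\ Theorem~3 of \cite{Brezis-Merle}, augmented by an additional $L^1$ right-hand side) furnishes a uniform local upper bound. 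In particular I fix $\tau'\in(\tau,1)$ and obtain $u^k\le M_0$ on $\partial B_{\tau'}$.

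Next I would propagate this bound into $B_{\tau'}$. Let $\psi^k$ solve $\Delta\psi^k=g^k-F^k$ in $B_{\tau'}$ with $\psi^k|_{\partial B_{\tau'}}=0$, where $F^k:=f^k|x|^{-2\alpha}e^{2u^k}$. Then $u^k-\psi^k$ is harmonic in $B_{\tau'}$, bounded above by $M_0$ on $\partial B_{\tau'}$, so the maximum principle together with the fact that subtracting the Newton potential of the non-negative $g^k$ only lowers $\psi^k$ gives
\[
u^k(x)\le\psi^k(x)+M_0\le V^k(x)+M_0,\qquad V^k(x):=\int_{B_{\tau'}}G_{\tau'}(x,y)F^k(y)\,dy.
\]
Since $\|F^k\|_{L^1}\le 2\pi(1-\alpha-\delta)$, the Brezis--Merle inequality applied to $V^k$ produces some $p>\tfrac{2}{1-\alpha}$ with $\int e^{pV^k}\le C$ uniformly in $k$. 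Feeding $u^k\le V^k+M_0$ back into the definition of $F^k$ gives $F^k\le C|x|^{-2\alpha}e^{2V^k}$, and a H\"older estimate combining $|x|^{-2\alpha}\in L^s$ for some $s<1/\alpha$ with $e^{2V^k}\in L^{p/2}$ produces $F^k\in L^q(B_{\tau'})$ uniformly for some $q>1$; elliptic regularity ($W^{2,q}\hookrightarrow L^\infty$ in dimension two when $q>1$) then gives $V^k\in L^\infty$ uniformly, and $u^k\le V^k+M_0\le C'$ follows. The main obstacle is choosing the H\"older exponents so that some admissible $q>1$ exists: the algebraic condition reduces to $p(1-\alpha)>2$, which is available precisely because of the strict slack $\delta>0$; without it one is left at the borderline $q=1$, where $W^{2,1}$ fails to embed into $L^\infty$ in two dimensions and the bootstrap breaks.
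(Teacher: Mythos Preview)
The paper does not supply its own proof of this lemma; it simply refers the reader to \cite[Lemma~5.1]{hlw}. So there is nothing in the present paper to compare your argument against.

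Your argument is correct and is the natural Brezis--Merle bootstrap, which is almost certainly what the cited reference does as well. The two-region decomposition is the right structure: on the annulus $B_1\setminus\overline{B_\tau}$ the weight is regular and $f^k\ge C_\tau$ gives $\int e^{2u^k}\le C$, hence $u^k_+\in L^1$, and classical Brezis--Merle (mass $<2\pi$) yields a local upper bound; then on $B_{\tau'}$ the Green representation plus the slack $\delta>0$ closes the loop. Your key estimate in Step~2 is right: with $\|F^k\|_{L^1}\le 2\pi(1-\alpha-\delta)$ the Brezis--Merle inequality gives $e^{pV^k}\in L^1$ for every $p<\tfrac{2}{1-\alpha-\delta}$, and since $\tfrac{2}{1-\alpha-\delta}>\tfrac{2}{1-\alpha}$ the H\"older pairing with $|x|^{-2\alpha}\in L^s$, $s<1/\alpha$, lands $F^k$ in some $L^q$, $q>1$, after which $W^{2,q}\hookrightarrow L^\infty$ finishes. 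The only place to be slightly more explicit is Step~1, where invoking ``Theorem~3 of \cite{Brezis-Merle}, augmented by an $L^1$ right-hand side'' deserves one sentence: since $g^k\ge0$, its Green potential is non-negative and subtracting it only lowers $u^k$; after splitting it off, the remaining harmonic part is controlled in $L^1$ via $u^k_+\in L^1$ on the annulus, and the same one-step bootstrap you carry out in Step~2 (now with $\alpha=0$) gives the $L^\infty$ bound there.
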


The last result of this section is about the Pohozaev typde identity for singular Toda system. See \cite{LWYZ,LWZ,lyz} for related results.

\begin{lemma}
\label{le4.1}
Let $(u_1^k,\cdots,u_n^k)$ be a sequence of solutions to
\begin{equation*}
\begin{cases}
\Delta u_i^k+\sum_{j=1}^na_{i,j}\frac{h_j^k}{|x|^{2\alpha_j}}e^{2u_j^k}=0~\mbox{in}~B_1,&i=1,\cdots,n,\\
\int_{B_1}\frac{h_i^k}{|x|^{2\alpha_i}}e^{2u_i^k}dx\leq C,\quad &i=1,\cdots,n,\\
|u_i^k(x)-u_i^k(y)|\leq C~\mbox{for every}~x,y\in\partial B_1,~&i=1,\cdots,n,\\
\|h_i^k(x)\|_{C^3(B_1)}\leq C,\quad 0<\frac1C\leq h_i^k(x)~\mathrm{in}~B_1,~&i=1,\cdots,n,
\end{cases}
\end{equation*}
for some $\alpha_i<1,~i=1,\cdots,n$ and $B_1$ is the unit ball in $\B$. Assume that $0$ is the only blow up point, that is,
\begin{equation*}
\sup_{B_1\setminus B_{\ve}} u_i^k(x)\leq C(\ve)~\ \mbox{for every}~\ 0<\ve<1,\quad i=1,\cdots,n.
\end{equation*}
Then setting $\mu_i=1-\alpha_i$ and
\begin{equation*}
\sigma_i:=\frac{1}{2\pi}\lim_{r\to0}\lim_{k\to\infty}\int_{B_r}\frac{h_i^k(x)}{|x|^{2\alpha_i}}e^{2u_i^k(x)}dx,
\quad i=1,\cdots,n,
\end{equation*}
we have
\begin{align*}
&\mathbf{A}_n:~\sum_{i=1}^n\sigma_i^2-\sum_{i=1}^{n-1}\sigma_i\sigma_{i+1}=\sum_{i=1}^n\mu_i\sigma_i,\\
&\mathbf{B}_n:~\sum_{i=1}^{n-1}\sigma_i^2+2\sigma_n^2-\sum_{i=1}^{n-2}\sigma_{i}\sigma_{i+1}-
2\sigma_{n-1}\sigma_n=\sum_{i=1}^{n-1}\mu_i\sigma_i+2\mu_n\sigma_n,\\
&\mathbf{C}_n:~2\sum_{i=1}^{n-1}\sigma_i^2+\sigma_n^2-2\sum_{i=1}^{n-1}\sigma_i\sigma_{i+1}=
2\sum_{i=1}^{n-1}\mu_i\sigma_i+\mu_n\sigma_n,\\
&\mathbf{D}_n:~\sum_{i=1}^n\sigma_i^2-\sum_{i=1}^{n-2}\sigma_{i}\sigma_{i+1}-\sigma_{n-2}\sigma_n=
\sum_{i=1}^n\mu_i\sigma_i,\\
&\mathbf{E}_n:~\sum_{i=1}^n\sigma_i^2-\sum_{i=1}^{n-2}\sigma_{i}\sigma_{i+1}-\sigma_{n-3}\sigma_n=
\sum_{i=1}^n\mu_i\sigma_i,~n=6,7,8,\\
&\mathbf{F}_4:~\sum_{i=1}^2\sigma_i^2+2\sum_{i=3}^4\sigma_i^2-\sigma_1\sigma_2
-2\sum_{i=2}^3\sigma_i\sigma_{i+1}=\sum_{i=1}^2\mu_i\sigma_i+2\sum_{i=3}^4\mu_i\sigma_i,\\
&\mathbf{G}_2:~3\sigma_1^2-3\sigma_1\sigma_2+\sigma_2^2=3\mu_1\sigma_1+\mu_2\sigma_2.
\end{align*}
In particular, for each type Toda system, if $(\sigma_1,\cdots,\sigma_n)\neq(0,\cdots,0)$, then there exists at least one index $i\in\{1,\cdots,n\}$ such that
\begin{equation}
\label{4.comparison}
\sigma_i\geq \mu_i=1-\alpha_i.
\end{equation}
\end{lemma}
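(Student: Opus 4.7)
The plan is to prove the Lemma in two stages: first, establish the quadratic Pohozaev identities for each Cartan type; second, deduce the pointwise comparison \eqref{4.comparison} as an elementary algebraic consequence of the identity.

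\textbf{Pohozaev identities.} Following the classical local-mass framework for Toda systems developed in \cite{LWYZ,LWZ,lyz}, I would proceed by the multiplier method. For each simple Lie algebra fix the positive diagonal matrix $D=\mathrm{diag}(d_1,\ldots,d_n)$ (the inverse squared lengths of the simple roots) so that $\tilde A := DA$ is symmetric positive definite; for the simply-laced types $\mathbf{A}_n$, $\mathbf{D}_n$, $\mathbf{E}_n$ all $d_i=1$, while for the other types the $d_i$ carry the non-trivial root-length weights. Setting $V_j^k:=h_j^k|x|^{-2\alpha_j}e^{2u_j^k}$, multiply the $i$-th equation by $d_i(x\cdot\nabla u_i^k)$, sum in $i$, and integrate over $B_r$; the Laplacian part becomes a pure boundary integral via the Rellich identity, while the bulk term $\sum_{i,j}(\tilde A)_{ij}\int V_j^k(x\cdot\nabla u_i^k)$ is evaluated using the pointwise identity
\[
2V_j^k(x\cdot\nabla u_j^k) = x\cdot\nabla V_j^k + 2\alpha_j V_j^k - V_j^k(x\cdot\nabla\log h_j^k),
\]
combined with $\int_{B_r} x\cdot\nabla V_j^k = -2\int_{B_r}V_j^k + r\int_{\partial B_r}V_j^k\,dS$. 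Sending $k\to\infty$ and then $r\to 0$, the symmetry of $\tilde A$, the mass concentration $(2\pi)^{-1}\int_{B_r}V_j^k\to\sigma_j$, and the blow-up profile $u_i^k\sim -\sigma_i\log|x|$ (which gives $x\cdot\nabla u_i^k\to-\sigma_i$ in the integrated sense against $V_j^k$) combine to produce the matrix identity $\tfrac12\,\sigma^T\tilde A\,\sigma=\sum_i d_i\mu_i\sigma_i$. Unfolding $\tilde A$ against the tables \eqref{4.abcd}--\eqref{4.efg} yields the displayed identities type-by-type.

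\textbf{Comparison \eqref{4.comparison}.} Assume for contradiction that $\sigma\geq 0$, $\sigma\neq 0$, and $\sigma_i<\mu_i$ for every $i$. Inspection of the stated identities shows that for each Cartan type (i) the coefficient $d_i^{*}>0$ of $\sigma_i^2$ on the left equals the coefficient of $\mu_i\sigma_i$ on the right, and (ii) every off-diagonal term on the left has the form $-\gamma_{ij}\sigma_i\sigma_j$ with $\gamma_{ij}\geq 0$. Subtracting $\sum_i d_i^{*}\sigma_i^2$ from both sides of the identity recasts it as
\[
-\sum_{i<j}\gamma_{ij}\sigma_i\sigma_j = \sum_i d_i^{*}(\mu_i-\sigma_i)\sigma_i.
\]
The right-hand side is strictly positive (some $\sigma_i>0$ and every $\mu_i-\sigma_i>0$), while the left-hand side is $\leq 0$, a contradiction. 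For $\mathbf{A}_n$ this boils down to $-\sum_{i=1}^{n-1}\sigma_i\sigma_{i+1}=\sum_{i=1}^n(\mu_i-\sigma_i)\sigma_i$; the remaining types reduce to the same scheme with different positive weights $d_i^{*}$ and non-negative $\gamma_{ij}$.

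\textbf{Main obstacle.} The delicate step is controlling the boundary contributions on $\partial B_r$ in the Pohozaev limit: the integrals involving $|\nabla u_i^k|^2$ and $V_j^k$ on $\partial B_r$ must be uniformly bounded in $k$ and behave correctly under the joint limit $k\to\infty$, $r\to 0$. This rests on propagating the oscillation hypothesis from $\partial B_1$ to each $\partial B_r$ for $0<r<1$ via a Harnack inequality on annuli free of blow-up (the origin being the only blow-up point), together with Brezis--Merle small-mass estimates to identify the leading-order contribution and discard the rest. The case-by-case bookkeeping for the non-simply-laced types $\mathbf{B}_n$, $\mathbf{C}_n$, $\mathbf{F}_4$, $\mathbf{G}_2$ adds administrative work because of the non-trivial weights $d_i$, but otherwise the scheme is uniform across all Cartan types.
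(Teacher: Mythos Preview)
Your argument for the comparison \eqref{4.comparison} is correct and is exactly what the paper does: rewrite each identity as $\sum_i d_i^*\sigma_i(\sigma_i-\mu_i)=\sum_{i<j}\gamma_{ij}\sigma_i\sigma_j\geq 0$ and read off that some $\sigma_i\geq\mu_i$. The paper writes this out only for $\mathbf{A}_n$ and says ``the other cases can be proved similarly''; your observation that in every displayed identity the coefficient of $\sigma_i^2$ equals that of $\mu_i\sigma_i$ and all cross terms are non-positive is precisely the uniform check needed.

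For the Pohozaev identities themselves your route differs from the paper's. The paper does not rederive anything: for the simply-laced types $\mathbf{A}_n,\mathbf{D}_n,\mathbf{E}_n$ it simply invokes \cite[Proposition~3.1]{LWZ}; for $\mathbf{B}_n,\mathbf{C}_n,\mathbf{G}_2$ it quotes Nie's embeddings into $\mathbf{A}_{2n},\mathbf{A}_{2n-1},\mathbf{A}_6$; and for $\mathbf{F}_4$ it performs the change of variables $\hat u_3=u_3+\tfrac12\log 2$, $\hat u_4=u_4+\tfrac12\log 2$ to obtain a symmetric coefficient matrix, then again applies \cite[Proposition~3.1]{LWZ}. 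Your proposal to symmetrize uniformly via the diagonal root-length matrix $D$ with $DA$ symmetric is the natural common generalization of the paper's $\mathbf{F}_4$ trick, and the matrix form $\tfrac12\sigma^T(DA)\sigma=\sum_i d_i\mu_i\sigma_i$ does reproduce all seven displayed identities (I checked $\mathbf{G}_2,\mathbf{B}_n,\mathbf{C}_n,\mathbf{F}_4$ against your weights). This is more self-contained than the paper's patchwork of citations, at the cost of having to actually carry out the boundary analysis you flag as the main obstacle.

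One inaccuracy in your sketch: the local profile is $u_i^k\sim -\bigl(\sum_j a_{ij}\sigma_j\bigr)\log|x|$, not $-\sigma_i\log|x|$, and the quadratic form in $\sigma$ arises from the Rellich boundary integral of $|\nabla u_i^k|^2$, not from the bulk pairing of $V_j^k$ against $x\cdot\nabla u_i^k$ (the latter produces only the linear term $\sum_i d_i\mu_i\sigma_i$ after the divergence manipulation you wrote). This does not affect the correctness of the final identity, but if you intend to write out the derivation rather than cite, that step needs to be fixed.
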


\begin{proof}
Since $\mathbf{A}_n$, $\mathbf{D}_n$, $\mathbf{E}_6,~\mathbf{E}_7,$ and $\mathbf{E}_8$ are symmetric matrix, we get the corresponding Pohozaev identity from \cite[Proposition 3.1]{LWZ} directly. For $\mathbf{B}_{n}$, $\mathbf{C}_{n}$ and $\mathbf{G}_2$, we derive their Pohozaev identities from $\mathbf{A}_{2n}$, $\mathbf{A}_{2n-1}$, $\mathbf{A}_6$ type Toda system respectively, see \cite[Lemma 4.1 and Lemma 4.2]{nie1} and \cite[Example 3.4]{nie2}. While for $\mathbf{F}_4$, let $\hat u_i=u_i,~i=1,2$ and $\hat u_i=u_i+\frac12\log2,~i=3,4$, we get $(\hat u_1,\hat u_2,\hat u_3,\hat u_4)$ satisfies
\begin{equation}
\label{4.modifiedf4}
\begin{cases}
\Delta \hat u_1^k+2\frac{h_1^k}{|x|^{2\alpha_1}}e^{2\hat u_1^k}-\frac{h_2^k}{|x|^{2\alpha_2}}e^{2\hat u_2^k}=0,\\
\Delta \hat u_2^k-\frac{h_1^k}{|x|^{2\alpha_1}}e^{2\hat u_1^k}+2\frac{h_2^k}{|x|^{2\alpha_2}}e^{2\hat u_2^k}
-\frac{h_3^k}{|x|^{2\alpha_3}}e^{2\hat u_3^k}
=0,\\
\Delta \hat u_3^k-\frac{h_2^k}{|x|^{2\alpha_2}}e^{2\hat u_2^k}+\frac{h_3^k}{|x|^{2\alpha_3}}e^{2\hat u_3^k}
-\frac12\frac{h_4^k}{|x|^{2\alpha_4}}e^{2\hat u_4^k}=0,\\
\Delta \hat u_4^k-\frac12\frac{h_3^k}{|x|^{2\alpha_3}}e^{2\hat u_3^k}
+\frac{h_4^k}{|x|^{2\alpha_4}}e^{2\hat u_4^k}=0.
\end{cases}
\end{equation}
We see that the coefficient matrix of \eqref{4.modifiedf4} is symmetric. By applying \cite[Proposition 3.1]{LWZ}, we get the related Pohozaev identity.

We shall prove \eqref{4.comparison} for $\mathbf{A}_n$ only, the other cases can be proved similarly. 
The Pohozaev identity for $A_n$ can be written as $$\sum_{i=1}^n\sigma_i(\sigma_i-\mu_i)=\sum_{i=1}^{n-1}\sigma_i\sigma_{i+1}\geq 0.$$  This shows that \eqref{4.comparison} holds for at least  one index $i\in \{1,\dots, n\}$.
\end{proof}

\bigskip

\end{document}